\documentclass[letterpaper, oneside]{amsart}
\usepackage{layout}	

\usepackage[
]{geometry}

	


       		
\usepackage[parfill]{parskip}   
\setlength{\parindent}{15pt}




\usepackage{amssymb}
\usepackage{amsthm}
\usepackage{amsmath}
\usepackage{enumitem}
\usepackage{mathtools}
\usepackage{hyperref}
\usepackage{colonequals}
\usepackage{mathrsfs}
\usepackage{color}
\usepackage{subcaption}
\usepackage{adjustbox}
\usepackage{ytableau}
\usepackage{bbm}

\usepackage{tikz}
\usetikzlibrary{cd}
\usetikzlibrary{decorations.markings}
\usetikzlibrary{arrows.meta}
\usetikzlibrary{patterns}

\tikzset{
    triple/.style args={[#1] in [#2] in [#3]}{
        #1,preaction={preaction={draw,#3},draw,#2}
    }
}      

\usepackage{tensor}

\theoremstyle{plain}
\newtheorem{thm}{Theorem}[section]
\newtheorem{lem}[thm]{Lemma}
\newtheorem{prop}[thm]{Proposition}
\newtheorem{cor}[thm]{Corollary}

\theoremstyle{definition}
\newtheorem{defn}[thm]{Definition}

\newtheorem{example}[thm]{Example}

\theoremstyle{remark}
\newtheorem*{rmk}{Remark}

\numberwithin{equation}{section}

\newcommand{\bA}{\mathbb{A}}

\newcommand{\bF}{\mathbb{F}}

\newcommand{\bN}{\mathbb{N}}
\newcommand{\bQ}{\mathbb{Q}}

\newcommand{\bZ}{\mathbb{Z}}

\newcommand{\bP}{\mathbb{P}}


\newcommand{\cB}{\mathcal{B}}
\newcommand{\cH}{\mathcal{H}}

\newcommand{\cP}{\mathcal{P}}

\newcommand{\fB}{\mathfrak{B}}
\newcommand{\fb}{\mathfrak{b}}

\newcommand{\fg}{\mathfrak{g}}

\newcommand{\ii}{\mathbbm{i}}
\newcommand{\vv}{\mathbbm{v}}
\newcommand{\ww}{\mathbbm{w}}

\newcommand{\Fq}{\mathbb{F}_q}
\newcommand{\Fqs}{\mathbb{F}_{q^2}}
\newcommand{\Fqbar}{\overline{\mathbb{F}_q}}

\newcommand{\br}[1]{\left\langle{#1}\right\rangle}

\newcommand{\qlbar}{{\overline{\mathbb{Q}_\ell}}}
\newcommand{\floor}[1]{\left\lfloor#1\right\rfloor}

\newcommand{\pch}[2]{\mathord{\downarrow}^{\left(#1\right)}_{\left(#2\right)}}

\newcommand{\tspq}[1]{\textup{\textbf{TSp}}_q\left(#1\right)}
\newcommand{\tspx}[1]{\textup{\textbf{TSp}}_x\left(#1\right)}
\newcommand{\chix}[1]{\mathcal{X}_x\left(#1\right)}

\newcommand{\spn}[1]{\textup{span}\left(#1\right)}



\DeclareMathOperator{\im}{\textup{im}} 

\DeclareMathOperator{\Res}{\textup{Res}}
\DeclareMathOperator{\tr}{\textup{tr}}

\DeclareMathOperator{\Ad}{\textup{Ad}}

\DeclareMathOperator{\Lie}{\textup{Lie}}

\DeclareMathOperator{\Spec}{\textup{Spec}}

\title{On the Betti numbers of Springer fibers for classical types}
\author{Dongkwan Kim}
\address{School of Mathematics\\
  University of Minnesota Twin Cities\\
  Minneapolis, MN 55455\\
  U.S.A.}
\email{kim00657@umn.edu}
\date{\today}							

\begin{document}
\begin{abstract} 
For a Weyl group $W$ of classical type, we present a formula to calculate the restriction of (graded) Springer representations of $W$ to a maximal parabolic subgroup $W'$ where the types of $W$ and $W'$ are in the same series. As a result, we obtain recursive formulas for the Betti numbers of Springer fibers for classical types.
\end{abstract}

\setcounter{tocdepth}{1}
\maketitle

\renewcommand\contentsname{}
\tableofcontents

\section{Introduction}
Let $G$ be a connected reductive group over $\Fq$ where $q$ is a power of some prime $p$ and let $W$ be its Weyl group. When $p$ and $q$ are sufficiently large, Springer \cite{spr76} defined the action of $W$ on the cohomology groups of the Springer fiber $\cB_N$ of any nilpotent element $N \in \Lie G$ defined over $\Fq$. Later, the condition on $p$ and $q$ is removed by Lusztig \cite{lus81} using the theory of perverse sheaves.

Now we suppose that $G$ is of classical type and split over $\Fq$. When $G$ is of type $A$, then its Springer representations are explicitly determined in terms of Green polynomials first appeared in Green's paper \cite{gre55}. Similarly, when $G$ is of type $B, C,$ or $D$ and $p>2$, then Srinivasan \cite{sri77} proved that its Springer representations are also described by some polynomials, which are now called Green functions. In other words, when $N \in \Lie G$ is a nilpotent element fixed by the standard Frobenius morphism $F$, then the eigenvalues of $F^*$ on each $H^i(\cB_N)$ is given by $\pm q^j$ for some $j \in \bN$, see \cite[Theorem 1]{sri77}.

The aim of this paper is to obtain some restriction formulas about these Green functions of classical types by exploiting the method of \cite{sri77}. To be precise, suppose that $W'$ is a maximal parabolic subgroup of $W$ such that the types of $W$ and $W'$ are in the same series. Then for any $N \in \Lie G$ and for each $i \in \bZ$, we express $\Res^W_{W'} H^i(\cB_N)$ in terms of Springer representations corresponding to $W'$. As a result, we also obtain inductive formulas for the Betti numbers of Springer fibers.

Note that there exists an algorithm developed by Shoji \cite{sho83} and Lusztig \cite{lus86} which calculates  the Green functions using the orthogonality of such functions and the theory of character sheaves. As our formula only gives information about the restriction of Springer representations to a fixed maximal parabolic subgroup, our result is weaker than their algorithm. However, we believe that our method is much faster than their algorithm if one only wants to calculate the Betti numbers of Springer fibers.

Finally, we remark that this paper supersedes one of the author's previous papers \cite{kim:euler}. In particular, if we forget the cohomological gradings on the main theorem, then we recover the result of \cite{kim:euler}.

This paper is organized as follows: in Section \ref{sec:setup} we review some basic notations and definitions which will be used throughout this paper; in Section \ref{sec:adapted} we define a notion of a standard model that is essential for most of the calculation in this paper; in Section \ref{sec:mainthm} we state our main theorem; in Section \ref{sec:conj} we recall the parametrization of rational nilpotent orbits in the Lie algebra of $G$; in Section \ref{sec:partial} partial Springer resolution studied by Borho and MacPherson \cite{bm83} is revisited; in Section \ref{sec:rect} we prove the main theorem in some special cases when all the Jordan blocks of a nilpotent element are the same size; in Section \ref{sec:gen} we generalize the result in the previous section and complete the proof of the main theorem.

\section{Setup}\label{sec:setup}
 Let $p>2$ be a prime number and $q$ be some power of $p$. We assume that $p$ and $q$ are sufficiently large, and that $q \equiv 1 \pmod{8}$ so that $2, -1 \in (\bF_q^\times)^2$. (However, the result of this paper is still valid for any prime $p>2$ and arbitrary $q$.) We fix the square root of 2 and -1, denoted $\sqrt{2}, \ii \in \Fq^\times, $ respectively.
 
For a variety $X$ defined over $\Fqbar$, we say that $X$ is defined over $\Fq$ if there exists $X_0$ over $\Fq$ such that $X=X_0 \times_{\Spec \Fq} \Spec \Fqbar.$ If so, we define $F$ to be the geometric Frobenius morphism $F: X\rightarrow X$ with respect to the corresponding $\Fq$-structure on $X$. Also write $X^F$ to be the set of (closed) points in $X$ fixed by $F$.
 
For a variety $X$, let $\qlbar_X$ be the constant $\qlbar$-adic sheaf on $X$ where $\ell$ is a prime different from the characteristic of $X$. For $i \in \bZ$, define $H^i(X) \colonequals H^i(X, \qlbar_X)$ to be the $i$-th $\ell$-adic cohomology group of $X$ and let $H^*(X) \colonequals \bigoplus_{i \in \bZ}(-1)^i H^i(X)$ be their alternating sum (as a virtual vector space). If $X$ is defined over $\Fq$, then there exists a canonical endomorphism $F^*$ on each $H^i(X)$ and also on $H^*(X)$

Let $G$ be either $SO_{2n+1}, Sp_{2n},$ or $SO_{2n}$ which is split over $\Fq$. Write $\fg$ to be the Lie algebra of $G$. Then any element $g\in G$ acts on $\fg$ by adjoint action, denoted by $\Ad_g: \fg \rightarrow \fg$. Also, we let $(V, \br{\ ,\ })$ be the defining representation of $G$ where $\br{\ , \ }: V \times V \rightarrow \Fqbar$ is an $F$-stable bilinear form respected by $G$. It is symmetric if $G=SO_{2n+1}$ or $G=SO_{2n}$, and symplectic if $G=Sp_{2n}$.

Let $W$ be the Weyl group of $G$ with a set $S$ of simple reflections. (Note that $F$ acts trivially on $W$ by assumption.) If $G$ is either $SO_{2n+1}$ or $Sp_{2n}$ for some $n \geq 1$, then we set $S=\{s_1, s_2, \ldots, s_n\}$ such that $(s_1s_2)^4=(s_is_{i+1})^3=id$ for any $2\leq i \leq n-1$, and let $W'$ be the maximal parabolic subgroup of $W$ generated by $s_1, s_2, \ldots, s_{n-1}$. If $G$ is $SO_{2n}$ for some $n \geq 2$, then we set $S=\{s_1, s_2, \ldots, s_n\}$ such that $(s_1s_3)^3=(s_2s_3)^3=(s_1s_2)^2=(s_is_{i+1})^3=id$ for any $3 \leq i \leq n-1$. If $n\geq 3$, then let $W'$ be the maximal parabolic subgroup of $W$ generated by $s_1, s_2, \ldots, s_{n-1}$. If $n=2$, then set $W'\colonequals\{id\}.$

Let $\cB$ be the flag variety of $G$ consisting of its Borel subgroups. For a nilpotent element $N \in \fg$, we define the Springer fiber of $N$ to be the variety $\cB_N \colonequals \{ B \in \cB \mid N \in \Lie B\}.$ Then each $H^i(\cB_N)$ is equipped with the Springer action of $W$ originally defined by \cite{spr76}. Here we adopt the convention of \cite{lus81} that $H^0(\cB_N)$ gives the trivial representation of $W$ for some/any nilpotent $N$. (It differs from \cite{spr76} by the sign character of $W$.) If $N$ is $F$-stable, then the endomorphism $F^*$ on each $H^i(\cB_N)$ commutes with the action of $W$. Furthermore, by \cite[3.9]{sho83} $H^i(\cB_N)=0$ if $i$ is odd and the eigenvalues of $F^*$ on $H^{2i}(\cB_N)$ are $\pm q^i$.

For a nilpotent $N \in \fg$, let $Z_G(N) \subset G$ be the stabilizer of $N$ under adjoint action of $G$. Then $Z_G(N)$ acts naturally on each $H^i(\cB_N)$, which descends to the action of its component group $A(N) \colonequals Z_G(N)/Z_G(N)^0$. It is known that the action of $A(N)$ and $W$ on $H^i(\cB_N)$ commute. Also, $F$ acts trivially on $A(N)$ \cite[3.1]{sho83} and thus $F^*$ and the action of $A(N)$ on $H^i(\cB_N)$ also commute.

For a partition $\lambda \vdash n$, we write either $\lambda=(\lambda_1, \lambda_2, \ldots)$ or $\lambda=(1^{m_1}2^{m_2}\cdots)$ to describe its parts. For example, we have $(6,6,4)=(4^16^2) \vdash 16$. Also, for $a_1, a_2, \ldots, a_k, b_1, b_2, \ldots, b_k \in \bN$ such that $\{a_1, a_2, \ldots, a_k\} \subset \{\lambda_1, \lambda_2, \ldots\}$ as a multiset, we define $\lambda\pch{a_1, a_2, \ldots, a_k}{b_1, b_2, \ldots, b_k}$ to be the partition whose parts are obtained from $\{\lambda_1, \lambda_2, \ldots\}$ by replacing $a_1, a_2, \ldots, a_k$ with $b_1, b_2, \ldots, b_k$ and rearranging it if necessary. For example, we have $(6,4,4,4, 3,2)\pch{4,4}{2,1} = (6,4,3,2,2,1)$.

For a nilpotent element $N \in \fg$, we define its Jordan type to be the partition whose parts are the sizes of Jordan blocks of $N$ considered as an endomorphism on $V$. If the Jordan type of $N$ is $\lambda$, we also say that $N$ is of Jordan type $\lambda$. If $G$ is $SO_{2n+1}$, then the nilpotent orbits in $\fg$ are parametrized by the set of partitions
$$\{ \lambda=(1^{m_1}2^{m_2} \cdots) \vdash 2n+1 \mid m_{2i} \textup{ is even for } i\in \bZ_{>0}\}$$
such that any element in the nilpotent orbit parametrized by $\lambda$ is of Jordan type $\lambda$. Similarly if $G$ is $Sp_{2n}$, then the nilpotent orbits in $\fg$ are parametrized by
$$\{ \lambda=(1^{m_1}2^{m_2} \cdots) \vdash 2n \mid m_{2i-1} \textup{ is even for } i\in \bZ_{>0}\}.$$
If $G$ is $SO_{2n}$, then there exists such a bijection between the nilpotent orbits in $\fg$ and
$$\{ \lambda=(1^{m_1}2^{m_2} \cdots) \vdash 2n \mid m_{2i} \textup{ is even for } i\in \bZ_{>0}\}$$
except that if $\lambda = (1^{m_1}2^{m_2} \cdots)$ satisfies $m_{2i-1}=0$ for all $i \in \bZ_{>0}$ then it corresponds to two nilpotent orbits in $\fg$. Such $\lambda$ is called a very even partition. In this case we use $\lambda+$ and $\lambda-$ to parametrize two such orbits (if necessary).

Note that $A(N)$ is an $\bF_2$-vector space for any nilpotent $N\in \fg$. If $N$ and $N'$ are in the same $G$-orbit, i.e. if there exists $g\in G$ such that $\Ad_g N=N'$, then it induces an isomorphism $\Ad_g: A(N)\rightarrow A(N')$. This does not depend on the choice of $g$ because for any $h \in Z_G(N)$, the isomorphism $\Ad_h : A(N) \simeq A(N)$ is equal to the identity map (since $A(N)$ is abelian). Thus for each nilpotent orbit in $\fg$ we may attach an abstract group which is canonically isomorphic to $A(N)$ for any element $N$ in this orbit. If such an orbit is parametrized by $\lambda$, then we denote this abstract group by $A(\lambda)$.

If $G=SO_{2n}$ and $\lambda$ is very even, we define $A(\lambda+)$ and $A(\lambda-)$ similarly. However, we may further identify $A(\lambda+)$ and $A(\lambda-)$; if the orbits of $N$ and $N'$ are parametrized by $\lambda+$ and $\lambda-$, respectively, then there exists $g \in O_{2n} - SO_{2n}$ such that $\Ad_gN = N'$. Then it descends to the isomorphism $\Ad_g : A(N) \rightarrow A(N')$, which is also canonical since $Z_{O_{2n}}(N)/Z_{O_{2n}}(N)^0$ is abelian. (Note that this group naturally contains $A(N)$ as a subgroup of index 2.) Thus we may define $A(\lambda)$ to be the abstract group canonically isomorphic to $A(N)$ for any $N \in \Lie SO_{2n}$ of Jordan type $\lambda$ even when $\lambda$ is very even.

Recall that there are actions of $W$ and $A(N)$ on $H^i(\cB_N)$ that commute with each other. Thus if $N$ is of Jordan type $\lambda$, we may regard $H^i(\cB_N)$ as a $W\times A(\lambda)$-module. Now for any $z \in A(\lambda)$ we define
$$\tspx{\lambda, z}\colon W \rightarrow \qlbar[x^{\pm 1/2}]: w \mapsto \sum_{i \in \bZ} (-1)^i \tr (wz, H^i(\cB_N)) x^{i/2}$$
where $x$ is an indeterminate. By \cite[Theorem 2]{sri77}, in fact this is a $\bQ[x]$-valued virtual character of $W$. When $G=SO_{2n}$ and $\lambda$ is very even, we define $\tspx{\lambda+,z}$ and $\tspx{\lambda-, z}$ similarly for any $z\in A(\lambda)$, and let 
$$\tspx{\lambda, z} \colonequals \frac{1}{2}(\tspx{\lambda+,z}+\tspx{\lambda-, z}).$$
Finally, we define $\tspq{\lambda, z} \colonequals \tspx{\lambda, z}|_{x=q}$.

\section{Standard model}\label{sec:adapted}
\subsection{Orthogonal decomposition with respect to $N$} \label{subsec:decomp} Suppose that $N \in \fg$ is a nilpotent element of Jordan type $(1^{m_1}2^{m_2} \cdots)$. Then there exists an orthogonal decomposition
$$V = V_1 \oplus V_2 \oplus \cdots$$
where each $V_i$ is $N$-stable and $N|_{V_i}$ is of Jordan type $(i^{m_i})$. Note that this decomposition is not canonical. For any nilpotent $N \in \fg$, we usually assume that such a decomposition of $V$ is also given. In such a case we define $\ker_i N \colonequals \ker N \cap V_i$. Likewise, let $V_{>i} \colonequals \bigoplus_{j>i} V_j$ and $\ker_{>i} N \colonequals \ker N \cap V_{>i}$, and define $V_{\geq i}, V_{< i}, V_{\leq i}, \ker_{\geq i} N, \ker_{< i} N,$ and  $\ker_{\leq i} N$ in an analogous manner.

\subsection{A basis adapted to $N$} \label{subsec:adapted}
Let $N \in \fg$ be a nilpotent element of Jordan type $\lambda$. We define a certain basis of $V$ as follows. First assume that $\lambda = (i^m)$ for some $i,m \in \bZ_{>0}$. Then we choose a basis $\{\vv_{s,t} \mid 1\leq s \leq m, 1 \leq t \leq i\}$ which satisfies 
\begin{gather*}
N \vv_{s,t}=\vv_{s,t+1} \textup{ if } 1\leq t<i, \quad N \vv_{s,i} = 0,
\\\br{\vv_{s,t}, \vv_{s',t'}}=0 \textup{ unless } s+s' = m+1 \textup{ and } t+t' = i+1,
\end{gather*}
and the following holds.
\begin{enumerate}[label=$\bullet$]
\item If $G=Sp_{mi}$ and $i$ is odd or $G=SO_{mi}$ and $i$ is even, then 
$$
\br{\vv_{s,t}, \vv_{m+1-s, i+1-t}}= \left\{
\begin{aligned}
&(-1)^{t+1} & \textup{ if } 1 \leq s\leq m/2
\\&(-1)^t & \textup{ if } m/2 < s \leq m
\end{aligned}
\right.
$$
\item Otherwise
$$\br{\vv_{s,t}, \vv_{m+1-s, i+1-t}}= (-1)^{t+1}.$$
\end{enumerate}
Note that it is always possible to choose such a basis.

In general, for $N \in \fg$ of Jordan type $(1^{m_1} 2^{m_2} 3^{m_3} \cdots)$ we let $V=V_1 \oplus V_2 \oplus \cdots$ be the decomposition defined in \ref{subsec:decomp}. Then we choose a basis $\{\vv_{s,t}^i\}_{s,t}$ of each $V_i$ as above with respect to $N|_{V_i}$. Then their union $\{\vv_{s,t}^i\}_{i,s,t}$ is a basis of $V$.

\begin{defn} Let $N \in \fg$ is a nilpotent element. If a basis $\fB$ of $V$ is (up to permutation) the same as the one defined above with respect to $N$, then we say that $\fB$ is adapted to $N$.
\end{defn}
Note that if $\fB$ is adapted to $N$, then $N\fB \subset \fB\cup \{0\}$.

\subsection{Description of $A(N)$ and a standard model} Here we describe a representative of each element of $A(N)$ in $Z_G(N)$. To this end first we set $\tilde{G} = O_{2n+1}$ (resp. $O_{2n}$) if $G=SO_{2n+1}$ (resp. $SO_{2n}$), and $\tilde{G}=G$ if $G=Sp_{2n}$. Let $\tilde{A}(N)$ and $Z_{\tilde{G}}(N)$ be the groups analogously defined to $A(N)$ and $Z_{G}(N)$ where $G$ is replaced by $\tilde{G}$. Note that when $G\subsetneq \tilde{G}$, $A(N)$ is naturally an index 2 subgroup of $\tilde{A}(N)$.

Assume that $N \in \fg$ is of Jordan type $(i^m)$ for some $i, m \in \bZ_{>0}$. If $G=Sp_{mi}$ and $i$ is odd, or $G=SO_{mi}$ and $i$ is even, then $\tilde{A}(N)$ is trivial. Otherwise, let $\{\vv_{s,t}\}_{s,t}$ be the basis of $V$ defined in \ref{subsec:adapted} and consider the following two cases.
\begin{enumerate}[label=$\bullet$]
\item Suppose that $m$ is odd. Then define $\tilde{z}: V \rightarrow V$ to be the linear map characterized by
$$\tilde{z}(\vv_{s,t}) = \vv_{s,t} \textup{ unless } s \neq \frac{m+1}{2}, \quad  \tilde{z}(\vv_{(m+1)/2,t}) = -\vv_{(m+1)/2,t}.$$
\item Suppose that $m$ is even. Then define $\tilde{z}: V \rightarrow V$ to be the linear map characterized by
$$\tilde{z}(\vv_{s,t}) = \vv_{s,t} \textup{ unless } s \neq \frac{m}{2} \textup{ or } \frac{m+2}{2}, \quad \tilde{z}(\vv_{m/2,t}) = \vv_{(m+2)/2,t}, \quad \tilde{z}(\vv_{(m+2)/2,t}) = \vv_{m/2,t}.$$
\end{enumerate}
In each case, it is easy to show that $\tilde{z} \in Z_{\tilde{G}}(N)$ and the image of $\tilde{z}$ in $\tilde{A}(N)$ equals the unique nontrivial element of $\tilde{A}(N)$.

In general, for $N \in \fg$ of Jordan type $\lambda=(1^{m_1} 2^{m_2} 3^{m_3} \cdots)$ we consider the decomposition
$V=V_1 \oplus V_2 \oplus \cdots$ as in \ref{subsec:decomp}. When $G=Sp_{2n}$, for each even $i$ define $\tilde{z}_i : V_i \rightarrow V_i$ to be the linear map defined above and extend trivially to $V$. When $G=SO_{2n+1}$ or $G=SO_{2n}$, for each odd $i$ we define $\tilde{z}_i :V_i \rightarrow V_i$ similarly. Then it is also clear that $\tilde{z}_i \in Z_{\tilde{G}}(N)$ and the image of $\{\tilde{z}_i\}_{i}$ generates $\tilde{A}(N)$. 

Recall the definition of $A(\lambda)$, an abstract group canonically isomorphic to $A(N)$. We define $\tilde{A}(\lambda)$ analogously to $A(\lambda)$. Let $z_i \in \tilde{A}(\lambda)$ be the image of $\tilde{z}_i \in Z_{\tilde{G}}(N)$ under the canonical homomorphism $Z_{\tilde{G}}(N) \twoheadrightarrow \tilde{A}(N) \simeq \tilde{A}(\lambda)$. Also when $G=Sp_{2n}$ (resp. $G=SO_{2n+1}$ or $G=SO_{2n}$), if $m_i = 0$ or $i$ is not even (resp. not odd) then put $z_i = id$. If $G=Sp_{2n}$ then we have
$$A(\lambda) = \tilde{A}(\lambda) =\left\{{\textstyle \prod_{i \textup{ even}, m_i >0} }\ z_i^{a_i} \mid a_i \in \{0,1\}\right\}.$$
Similarly if $G=SO_{2n+1}$ or $G=SO_{2n}$ then
\begin{align*}
\tilde{A}(\lambda) &=\{{\textstyle \prod_{i \textup{ odd}, m_i >0} }\ z_i^{a_i} \mid a_i \in \{0,1\}\}
\\A(\lambda) &= \{{\textstyle \prod_{i \textup{ odd}, m_i >0} }\ z_i^{a_i} \in\tilde{A}(\lambda) \mid \sum a_i \in 2\bN\}.
\end{align*}

We define the notion of a standard model. Later, most calculation in this paper will begin with a proper choice of some standard model.
\begin{defn} For $N \in \fg$ of Jordan type of $\lambda$, the triple $(N, \{\vv^i_{s,t}\}_{i,s,t}, \{\tilde{z}_i\}_i)$ defined above is called a standard model (or a standard model for $\lambda$ if we need to specify the Jordan type $\lambda$ of $N$).
\end{defn}
Note that $ \{\tilde{z}_i\}_i$ is a subset of $Z_{\tilde{G}}(N)$, whose image generates $\tilde{A}(N)$.

\section{Main theorem}\label{sec:mainthm}
We are ready to state the main theorem of this paper.
\begin{thm} \label{thm:main}Let $\lambda=(1^{m_1}2^{m_2} \cdots)$ be the Jordan type of some nilpotent element in $\fg$ and let $z=\prod_j z_j^{a_j}\in A(\lambda)$ for some $a_j \in \{0,1\}$. Write $m_{>i} \colonequals \sum_{j > i} m_j$.
\begin{enumerate}[label=\textup{(\arabic*)}]
\item If $G=SO_{2n+1}$ or $G=SO_{2n}$ for some $n \geq 2$, then $\Res^W_{W'}\tspx{\lambda, z}$ is equal to
\begin{align*}
& \sum_{i \textup{ even}}  x^{m_{>i}}\frac{x^{m_i}-1}{x-1}  \tspx{\lambda\pch{i,i}{i-1,i-1}, z}
\\& +\sum_{\substack{i \textup{ odd}\\m_i \textup{ odd}}} x^{m_{>i}} \left[
\begin{aligned}
&\frac{x^{{m_i}-1}-1}{x-1}\tspx{\lambda\pch{i,i}{i-1,i-1}, z}
\\&+\frac{x^{{m_i}-1}+x^{(m_i-1)/2}}{2}\tspx{\lambda\pch{i}{i-2},z (z_iz_{i-2})^{a_i}}
\\&+\frac{x^{{m_i}-1}-x^{(m_i-1)/2}}{2}\tspx{\lambda\pch{i}{i-2},z (z_i z_{i-2})^{a_i+1}}
\end{aligned}
\right]
\\&+\sum_{\substack{i \textup{ odd}\\m_i \textup{ even}}} x^{m_{>i}} \left[
\begin{aligned}
& \left(\frac{x^{m_i-1}-1}{x-1}+(-1)^{a_i}x^{m_i/2-1}\right)\tspx{\lambda\pch{i,i}{i-1,i-1}, z}
\\&+\frac{x^{{m_i}-1}-(-1)^{a_i}x^{m_i/2-1}}{2}\tspx{\lambda\pch{i}{i-2},z (z_iz_{i-2})^{a_i}}
\\&+\frac{x^{{m_i}-1}-(-1)^{a_i}x^{m_i/2-1}}{2}\tspx{\lambda\pch{i}{i-2},z (z_i z_{i-2})^{a_i+1}}
\end{aligned}
\right]
\end{align*}
as a character of $W'$. Here, in the expression above $z = \prod_j z_j^{a_j}$ is understood as an element of $A(\lambda\pch{i,i}{i-1,i-1})$ or $A(\lambda\pch{i}{i-2})$. Also, we put $\tspx{(1,1),id} = 1$ and $\tspx{\lambda\pch{1}{-1},-}=0$.
\item If $G=Sp_{2n}$ for some $n \geq 2$, then $\Res^W_{W'}\tspx{\lambda, z}$ is equal to
\begin{align*}
& \sum_{i \textup{ odd}}  x^{m_{>i}}\frac{x^{m_i}-1}{x-1}  \tspx{\lambda\pch{i,i}{i-1,i-1}, z}
\\& +\sum_{\substack{i \textup{ even}\\m_i \textup{ odd}}} x^{m_{>i}} \left[
\begin{aligned}
&\frac{x^{{m_i}-1}-1}{x-1}\tspx{\lambda\pch{i,i}{i-1,i-1}, z}
\\&+\frac{x^{{m_i}-1}+x^{(m_i-1)/2}}{2}\tspx{\lambda\pch{i}{i-2},z (z_iz_{i-2})^{a_i}}
\\&+\frac{x^{{m_i}-1}-x^{(m_i-1)/2}}{2}\tspx{\lambda\pch{i}{i-2},z (z_i z_{i-2})^{a_i+1}}
\end{aligned}
\right]
\\&+\sum_{\substack{i \textup{ even}\\m_i \textup{ even}}} x^{m_{>i}} \left[
\begin{aligned}
& \left(\frac{x^{m_i-1}-1}{x-1}+(-1)^{a_i}x^{m_i/2-1}\right)\tspx{\lambda\pch{i,i}{i-1,i-1}, z}
\\&+\frac{x^{{m_i}-1}-(-1)^{a_i}x^{m_i/2-1}}{2}\tspx{\lambda\pch{i}{i-2},z (z_iz_{i-2})^{a_i}}
\\&+\frac{x^{{m_i}-1}-(-1)^{a_i}x^{m_i/2-1}}{2}\tspx{\lambda\pch{i}{i-2},z (z_i z_{i-2})^{a_i+1}}
\end{aligned}
\right].
\end{align*}
as a character of $W'$. Here, in the expression above $z = \prod_j z_j^{a_j}$ is understood as an element of $A(\lambda\pch{i,i}{i-1,i-1})$ or $A(\lambda\pch{i}{i-2})$. 
\end{enumerate}
\end{thm}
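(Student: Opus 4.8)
The plan is to prove Theorem \ref{thm:main} by reducing the computation of $\Res^W_{W'}\tspx{\lambda,z}$ to an explicit geometric analysis of how the Springer fiber $\cB_N$ sits over the partial flag variety associated with $W'$. Concretely, $W'$ is the stabilizer of an isotropic line (or, in the even orthogonal case with $n=2$, the whole flag), so there is a partial flag variety $\cP$ of such isotropic lines together with a partial Springer resolution map $\cB_N \to \cB^{W'}_N \to \cP$; restricting the $W$-representation $H^*(\cB_N)$ to $W'$ amounts to understanding the fibers of this map and the induced $W'$-action, in the spirit of Borho--MacPherson \cite{bm83} (which is the content promised for Section \ref{sec:partial}). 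First I would stratify $\cP$ according to the position of the isotropic line $L$ relative to the flag $V_1\oplus V_2\oplus\cdots$ and relative to $\ker N$, $N V$, etc.; on each stratum the corresponding piece of $\cB_N$ is (up to the relevant affine-space and vector-bundle contributions accounting for the powers of $x$) a Springer fiber for a smaller group $G'$ acting on $L^\perp/L$, whose nilpotent has Jordan type $\lambda\pch{i,i}{i-1,i-1}$ or $\lambda\pch{i}{i-2}$ depending on which $V_i$ the line $L$ (or its image) lands in. The $A(\lambda)$-equivariance is tracked by seeing how the representatives $\tilde z_j$ restrict to $L^\perp/L$; this is exactly where the twisted parameters $z(z_iz_{i-2})^{a_i}$ and the parity-dependent signs $(-1)^{a_i}$ enter.

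The key steps, in order, are: (1) set up a standard model for $\lambda$ (Section \ref{sec:adapted}) adapted so that the distinguished isotropic lines are spanned by the basis vectors $\vv^i_{s,t}$, and describe $\cP$ and the relevant strata combinatorially in terms of these vectors; (2) for each stratum, identify $L^\perp/L$ with the defining representation of a classical group $G'$ of one rank lower, compute the Jordan type of the induced nilpotent $\bar N$, and exhibit the fibration of the corresponding locally closed piece of $\cB_N$ over a product of a smaller Springer fiber $\cB_{\bar N}$ with affine spaces and flag-type bundles, thereby reading off the power-of-$x$ prefactors $x^{m_{>i}}(x^{m_i}-1)/(x-1)$ etc.; (3) decompose each such piece $A(\lambda)$-equivariantly, using that $\tilde z_j$ either preserves $L$ and descends directly to $L^\perp/L$ or swaps $\vv$-vectors in a way that changes the relevant $z_i z_{i-2}$ by the sign it picks up on $L$ — this yields the two summands with $z(z_iz_{i-2})^{a_i}$ and $z(z_iz_{i-2})^{a_i+1}$ and their coefficients $(x^{m_i-1}\pm x^{(m_i-1)/2})/2$; (4) handle the parity cases ($m_i$ odd vs.\ even, $i$ odd vs.\ even, and the group-type-dependent swap of ``odd'' and ``even'' between the $SO$ and $Sp$ cases) separately, since the structure of $A(\lambda)$ and the constraint $\sum a_i \in 2\bN$ in the orthogonal case force the parity-dependent term $(-1)^{a_i}x^{m_i/2-1}$; (5) sum over all strata, i.e.\ over all $i$, and match with the claimed formula, using the normalizing conventions $\tspx{(1,1),id}=1$ and $\tspx{\lambda\pch{1}{-1},-}=0$ to absorb boundary cases. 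In practice I expect to first prove the rectangular case $\lambda=(i^m)$ in full detail (Section \ref{sec:rect}), where $\cP$ has a particularly clean stratification, and then bootstrap to general $\lambda$ by an orthogonal-decomposition argument (Section \ref{sec:gen}) that isolates the block $V_i$ on which the interesting geometry occurs.

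The main obstacle, I anticipate, will be step (3)–(4): correctly computing the $A(\lambda)$-action on the cohomology of each stratum's fiber, including the precise signs and the split into a ``plus'' and ``minus'' eigenspace whose dimensions differ by $x^{(m_i-1)/2}$ (or $x^{m_i/2-1}$). This requires a careful choice of the bilinear form on each rectangular block (the two cases in \ref{subsec:adapted} with the $(-1)^{t+1}$ versus the branching $(-1)^{t+1}/(-1)^t$ formula are tailored precisely for this) so that the representative $\tilde z_i$ acts with a controllable trace on the relevant flag varieties, and a fixed-point or Lefschetz-type count — e.g.\ comparing $\tr(wz,H^*(\cB_N))$ with $\tr(wz\tilde z_i,H^*(\cB_N))$ — to separate the two halves. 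A secondary subtlety is the very even case for $SO_{2n}$: one must check that averaging $\tspx{\lambda+,z}$ and $\tspx{\lambda-,z}$ is compatible with the stratawise recursion, i.e.\ that the $\pm$ labels propagate correctly (or wash out) under the partial resolution; I would dispatch this by noting that passing to $L^\perp/L$ lands in a group where the induced partition is no longer very even, so the averaging on the source matches the averaging (or its absence) on the target.
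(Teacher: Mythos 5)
Your overall skeleton --- the Borho--MacPherson partial resolution over the Grassmannian of isotropic lines, the rectangular case $\lambda=(i^m)$ first, then an orthogonal-decomposition reduction for general $\lambda$ --- does match the paper. But two load-bearing ideas are missing or misstated, and without them the coefficients in the formula cannot be produced.

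First, the dichotomy between the terms $\tspx{\lambda\pch{i,i}{i-1,i-1},\cdot}$ and $\tspx{\lambda\pch{i}{i-2},\cdot}$ is \emph{not} decided by ``which $V_i$ the line $L$ lands in.'' For a line $l$ inside a single block $\ker_i N$ (with $i$ even in type $C$, odd in types $B,D$), the Jordan type of $N|_{l^\perp/l}$ is governed by the canonical quadratic form $Q$ on $\ker_i N$, $Q(v)=\br{v',v}$ with $N^{i-1}v'=v$: it is $\lambda\pch{i,i}{i-1,i-1}$ on the quadric $Q=0$ and $\lambda\pch{i}{i-2}$ off it. The coefficients $(x^{m_i-1}\pm x^{(m_i-1)/2})/2$ are precisely the counts of rational lines off the quadric on which $Q$ takes square versus non-square values, and the correction $(-1)^{a_i}x^{m_i/2-1}$ is the difference between the point counts of the split quadric under $F$ and under the twisted Frobenius $\tilde z F$. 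Your stratification ``by blocks'' never sees $Q$, so it cannot yield these numbers.

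Second, your mechanism for extracting the coefficients is a stratification of $\cB_N$ into affine/vector-bundle pieces; but the $W'$-action is not geometric on $\cB_N$, so such a decomposition is not $W'$-equivariant. The paper instead works on $\cP_N=\bP(\ker N)$, where $\eta_{N*}\qlbar_{\cB_N}$ carries the $W_{\cP}$-action, and applies the Grothendieck trace formula to the Weil sheaf with Frobenius twisted by a representative $\tilde z\in Z_G(N)^F$. This gives $\Res^W_{W'}\tspq{\lambda,z}=\sum_{l\in\bP(\ker N)^{\tilde z F}}\tspq{\lambda(l),z(l)}$, where $z(l)$ records the \emph{rational} $G^{\tilde z F}$-orbit of $N|_{l^\perp/l}$ (split versus twisted forms, detected by splitness of the induced quadratic forms); so the identity is first proved only at $x=q$, for all powers $q^a$, and then upgraded to a polynomial identity. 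Moreover, the reduction from $(i^m)$ to general $\lambda$ requires the nontrivial lemma that $Z_G(N)^{F'}$ acts transitively on the $F'$-fixed lines lying over a given line of $\bP(\ker_i N)$, which is what justifies the clean prefactor $q^{m_{>i}}$; your ``orthogonal-decomposition argument'' does not supply this, since a line in $\ker N$ need not lie in a single block. Your Lefschetz-comparison idea in step (3) gestures at the right tool, but as written the proposal would stall exactly at the computation of the twist parameters and the quadratic-residue counts.
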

\begin{rmk} A similar formula holds for $G=GL_n$; see \cite[Remark 2.4]{hs77} or \cite[Theorem 4.4]{kim:euler}.
\end{rmk}

\begin{example} For brevity let us write $\chix{\lambda,z} \colonequals \tspx{\lambda,z}(id)$. Here we calculate some $\chix{\lambda,z}$ when $G=Sp_{2n}, n\in \{2, 3\}$ from the initial condition $\chix{(2),id}=\chix{(2),z_2}= 1$ and $\chix{(1,1),id}= x + 1$. First when $G=Sp_{4}$, we have
\begin{align*}
\\\chix{(4),id}&= \chix{(2),id} = 1\allowdisplaybreaks
\\\chix{(4),z_4}&= \chix{(2),z_2} = 1\allowdisplaybreaks
\\\chix{(2,2),id}&= \frac{x-1}{2}\chix{(2),z_2}+\frac{x-1}{2}\chix{(2),id}+(2)\chix{(1,1),id} = 3x + 1\allowdisplaybreaks
\\\chix{(2,2),z_2}&= \frac{x+1}{2}\chix{(2),z_2}+\frac{x+1}{2}\chix{(2),id} = x + 1\allowdisplaybreaks
\\\chix{(2,1,1),id}&= (x^2 + x)\chix{(2),id}+\chix{(1,1),id} = x^2 + 2x + 1\allowdisplaybreaks
\\\chix{(2,1,1),z_2}&= (x^2 + x)\chix{(2),z_2}+\chix{(1,1),id} = x^2 + 2x + 1\allowdisplaybreaks
\\\chix{(1,1,1,1),id}&= (x^3 + x^2 + x + 1)\chix{(1,1),id} = x^4 + 2x^3 + 2x^2 + 2x + 1\allowdisplaybreaks
\end{align*}

Similarly, when $G=Sp_6$ we have
\begin{align*}
\\\chix{(6),id}&= \chix{(4),id} = 1\allowdisplaybreaks
\\\chix{(6),z_6}&= \chix{(4),z_4} = 1\allowdisplaybreaks
\\\chix{(4,2),id}&= x\chix{(4),id}+\chix{(2,2),id} = 4x + 1\allowdisplaybreaks
\\\chix{(4,2),z_2}&= x\chix{(4),id}+\chix{(2,2),z_2} = 2x + 1\allowdisplaybreaks
\\\chix{(4,2),z_4}&= x\chix{(4),z_4}+\chix{(2,2),z_2} = 2x + 1\allowdisplaybreaks
\\\chix{(4,2),z_2z_4}&= x\chix{(4),z_4}+\chix{(2,2),id} = 4x + 1\allowdisplaybreaks
\\\chix{(4,1,1),id}&= (x^2 + x)\chix{(4),id}+\chix{(2,1,1),id} = 2x^2 + 3x + 1\allowdisplaybreaks
\\\chix{(4,1,1),z_4}&= (x^2 + x)\chix{(4),z_4}+\chix{(2,1,1),z_2} = 2x^2 + 3x + 1\allowdisplaybreaks
\\\chix{(3,3),id}&= (x + 1)\chix{(2,2),id} = 3x^2 + 4x + 1\allowdisplaybreaks
\\\chix{(2,2,2),id}&= \frac{x^2-x}{2}\chix{(2,2),z_2}+\frac{x^2+x}{2}\chix{(2,2),id}+(x + 1)\chix{(2,1,1),id} 
\\&= 3x^3 + 5x^2 + 3x + 1\allowdisplaybreaks
\\\chix{(2,2,2),z_2}&= \frac{x^2-x}{2}\chix{(2,2),z_2}+\frac{x^2+x}{2}\chix{(2,2),id}+(x + 1)\chix{(2,1,1),z_2}
\\& = 3x^3 + 5x^2 + 3x + 1\allowdisplaybreaks
\\\chix{(2,2,1,1),id}&= (x^3 + x^2)\chix{(2,2),id}+\frac{x-1}{2}\chix{(2,1,1),z_2}+\frac{x-1}{2}\chix{(2,1,1),id}
\\&\quad +2\chix{(1,1,1,1),id}  = 5x^4 + 9x^3 + 6x^2 + 3x + 1\allowdisplaybreaks
\\\chix{(2,2,1,1),z_2}&= (x^3 + x^2)\chix{(2,2),z_2}+\frac{x+1}{2}\chix{(2,1,1),z_2}+\frac{x+1}{2}\chix{(2,1,1),id}
\\& = x^4 + 3x^3 + 4x^2 + 3x + 1\allowdisplaybreaks
\\\chix{(2,1,1,1,1),id}&= (x^4 + x^3 + x^2 + x)\chix{(2,1,1),id}+\chix{(1,1,1,1),id} 
\\&= x^6 + 3x^5 + 5x^4 + 6x^3 + 5x^2 + 3x + 1\allowdisplaybreaks
\\\chix{(2,1,1,1,1),z_2}&= (x^4 + x^3 + x^2 + x)\chix{(2,1,1),z_2}+\chix{(1,1,1,1),id} 
\\&= x^6 + 3x^5 + 5x^4 + 6x^3 + 5x^2 + 3x + 1\allowdisplaybreaks
\\\chix{(1,1,1,1,1,1),id}&= (x^5 + x^4 + x^3 + x^2 + x + 1)\chix{(1,1,1,1),id} 
\\&= x^9 + 3x^8 + 5x^7 + 7x^6 + 8x^5 + 8x^4 + 7x^3 + 5x^2 + 3x + 1\allowdisplaybreaks
\end{align*}
\end{example}

The rest of this paper is devoted to the proof of this main theorem. From now on, we assume that $G=Sp_{2n}$ for some $n \geq 2$ and give a proof in this case. However, the cases when $G=SO_{2n+1}$ and $G=SO_{2n}$ can be proved in almost the same manner. Later we give a remark for the proof in such cases at the end of each following section.

\section{Rational orbits of nilpotent elements}\label{sec:conj}
\subsection{Quadratic form $Q$ on $\ker N$} \label{subsec:Q}
We assume that $N\in \fg$ is a nilpotent element of Jordan type $(i^m)$ for some $i,m \in \bZ_{>0}$. Then there exists a canonical quadratic form $Q: \ker N \rightarrow \Fqbar$ as follows. For $v \in \ker N$, choose any $v' \in V$ such that $N^{i-1}v' = v$. (Such $v'$ always exist since $\ker N = \im N^{i-1}$.) Then it is easy to show that $\br{v', v}$ does not depend on the choice of $v'$ but only on $v$. We define $Q:  \ker N \rightarrow \Fqbar: v \mapsto \br{v', v}$. Then $Q$ is easily seen to be a quadratic form on $\ker N$. (ref. \cite[p.1246]{sri77}, \cite[2.1]{sho83}, \cite[2.3]{vl89})

If $i$ is odd, then direct calculation shows that $Q=0$. Otherwise, $Q$ is non-degenerate. We recall the basis $\{\vv_{s,t} \mid 1 \leq s \leq m, 1\leq t \leq i\}$ defined in \ref{subsec:adapted}. Then clearly $\{\vv_{s,i}\}_{s}$ is a basis of $\ker N$. Now we identify $\ker N$ with $\Fqbar^m$ where $v =x_1 \vv_{1,i}+x_2 \vv_{2,i} + \cdots$ is sent to $(x_1, x_2, \ldots)$. Then $Q$ is given by the equation
\begin{align*}
&2x_1 x_m +2 x_2 x_{m-1} + \cdots + 2x_{\frac{m-1}{2}}x_{\frac{m+3}{2}}+(x_{\frac{m+1}{2}})^2 &\textup{if $m$ is odd and}
\\&2x_1 x_m +2 x_2 x_{m-1} + \cdots + 2x_{\frac{m}{2}-1}x_{\frac{m}{2}+2}+2x_{\frac{m}{2}}x_{\frac{m}{2}+1}
&\textup{if $m$ is even.}
\end{align*}

In general, for a nilpotent element $N \in \fg$ of Jordan type $(1^{m_1}2^{m_2} \cdots)$ consider the decomposition $V= V_1 \oplus V_2 \oplus \cdots$ defined in \ref{subsec:decomp}. Then similarly there exists a canonical quadratic form on each $\ker_i N$, which we denote by $Q^i$.

\subsection{Quadratic form $Q_{F'}$ on $(\ker N)^{F'}$.}  \label{subsec:QF} Before we proceed, let us recall the classification of quadratic forms over $\Fq$. Let $\tilde{V}=\Fq^n$ and $\tilde{Q}$ be a quadratic form $\tilde{Q}: \tilde{V} \rightarrow \Fq$. We say that $\tilde{Q}$ is split if it is isometric over $\Fq$ to the one given by the equation
\begin{align*}
&2(x_1x_2+x_3x_4+\cdots + x_{n-1}x_n) &\textup{ if $n$ is even, and}
\\&2(x_1x_2+x_3x_4+\cdots + x_{n-2}x_{n-1})+x_n^2 & \textup{ if $n$ is odd}
\end{align*}
where $(x_1, x_2, \ldots, x_n)$ are $\Fq$-coordinates of $\tilde{V}$. Otherwise it is called non-split. Then it is known that $\tilde{Q}$ is split if and only if $\det \tilde{Q}$ is a quadratic residue modulo $q$. (Recall that $-1$ is a quadratic residue modulo $q$ by our assumption.)

Now let $F'=\Ad_g \circ F$ for some $g\in G$, thus in particular $F'$ respects $\br{\ , \ }$. Let $N \in \fg$ be an $F'$-stable nilpotent element of Jordan type $(i^m)$ for some $i, m \in \bZ_{>0}$ and suppose that $i$ is even. Then the quadratic form $Q: \ker N \rightarrow \Fqbar$ is non-degenerate and $F'$-stable, thus  $Q$ restricts to a well-defined quadratic form $Q_{F'}: (\ker N)^{F'} \rightarrow \Fq$. We recall the basis $\{\vv_{s,t}\}_{s,t}$ of $V$ defined in \ref{subsec:adapted}. If $F'(\vv_{s,i}) = \vv_{s,i}$ for each $1\leq s \leq m$, then direct calculation shows that $\det Q_{F'}$
is a quadratic residue modulo $q$. Thus in particular $Q_{F'}$ is split.

In general, for an $F'$-stable nilpotent element $N \in \fg$ of Jordan type $(1^{m_1}2^{m_2} \cdots)$ consider the decomposition $V= V_1 \oplus V_2 \oplus \cdots$ defined in \ref{subsec:decomp}. Then each $Q^i$ restricts to a quadratic from on $(\ker_i N)^{F'}$, which we denote by $Q^i_{F'}$. Similarly, if $i$ is even and $F'(\vv^i_{s,i}) = \vv^i_{s,i}$ for any $1\leq s\leq m_i$ then $Q^i_{F'}$ is split.

\subsection{Split nilpotent element}
Here we recall the notion of a split nilpotent element. Again $F' = \Ad_g\circ F$ for some $g\in G$.
\begin{defn}[{\cite{sho83, bs84}}] Let $N \in \fg$ be an $F'$-stable nilpotent element. Then we say that $N$ is split with respect to $F'$ if $F'$ acts trivially on the set of irreducible components of $\cB_N$. In particular if $F'=F$, then we say that $N$ is split. (It was called distinguished in \cite{sho83}.)
\end{defn}
There is a strong connection between splitness of $N$ and corresponding quadratic forms $Q^i_{F'}$ as described in the following proposition.
\begin{prop} Let $N \in \fg$ be an $F'$-stable nilpotent element of Jordan type $(1^{m_1}2^{m_2} \cdots)$. Then $N$ is split with respect to $F'$ if and only if the following two statements hold:
\begin{enumerate}[label=$\bullet$]
\item For even $i$ such that $m_i$ is even, any $Q^i_{F'}$ is split.
\item For even $i$ such that $m_i$ is odd, either all $Q^i_{F'}$ are split or all $Q^i_{F'}$ are non-split.
\end{enumerate}
\end{prop}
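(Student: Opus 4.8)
The plan is to reduce the statement about the whole Springer fiber $\cB_N$ to a statement about each isotypic summand $V_i$ of the orthogonal decomposition $V = V_1 \oplus V_2 \oplus \cdots$ from \ref{subsec:decomp}, and there to invoke the classical dictionary between irreducible components of $\cB_N$, standard Young tableaux / $\cO$-data, and the nondegenerate quadratic forms $Q^i$ on $\ker_i N$. The first step is the geometric input: by work of Spaltenstein (and as used in \cite{sho83,bs84}), the irreducible components of $\cB_N$ for $N$ of Jordan type $\lambda$ in a symplectic space are indexed by a combinatorial set on which the relevant reductive centralizer acts, and the $F'$-action on this index set factors through the action of $F'$ on the collection of nondegenerate quadratic spaces $((\ker_i N)^{F'}, Q^i_{F'})$ for even $i$ — more precisely through the isometry type (split versus non-split) of each such form. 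Odd $i$ contribute nothing because $Q^i = 0$ there, and $F'$ is automatically trivial on those components. So the task becomes: determine exactly when $F'$ fixes every element of this index set in terms of the splitness of the $Q^i_{F'}$.

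Next I would analyze the index set one even part $i$ at a time. For a single Jordan type $(i^{m_i})$ with $i$ even, the components of the corresponding Springer fiber in the orthogonal/symplectic space $V_i$ are acted on by the component group, and the key combinatorial fact is a parity phenomenon: when $m_i$ is even, the isometry class of the single form $Q^i_{F'}$ (split or not) determines whether $F'$ permutes two families of components or fixes them all — so splitness of $Q^i_{F'}$ is exactly the condition. When $m_i$ is odd, there is an extra $\bZ/2$ of ambiguity coming from the "middle" basis vector $\vv_{(m_i+1)/2,i}$ (visible in the quadratic form formula in \ref{subsec:Q}, where an unpaired square $(x_{(m_i+1)/2})^2$ appears), and what matters is not the individual splitness but whether all the $Q^i_{F'}$ are simultaneously split or simultaneously non-split; a uniform twist in all of them is absorbed, while a discrepancy between two of them genuinely moves a component. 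I would make this precise by writing down representatives of the components adapted to the basis $\{\vv^i_{s,t}\}$ and computing the $F'$-action on them directly, using that $F'$ respects $\br{\ ,\ }$ and stabilizes each $V_i$ (after conjugating $g$ suitably, as in \ref{subsec:QF}), reducing everything to how $F'$ acts on $(\ker_i N, Q^i)$.

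Finally I would assemble the local statements: $\cB_N$'s components are (via a product/fibration decomposition over the parts, again Spaltenstein) built from those of the pieces, and $F'$ acts trivially on the total set if and only if it acts trivially on each factor, giving the conjunction of the two bulleted conditions. The case analysis splits cleanly into "$m_i$ even" and "$m_i$ odd" exactly as in the two bullets, and "split $N$" ($F'=F$) is the special case where every $\vv^i_{s,i}$ is $F$-fixed, recovering the fact recorded at the end of \ref{subsec:QF} that all $Q^i_F$ are then split. The main obstacle I expect is the bookkeeping in the odd-$m_i$ case: correctly identifying the extra $\bZ/2$ and showing that it is the \emph{difference} of the splitness invariants across distinct even parts $i$ (rather than each one separately) that obstructs triviality of the $F'$-action — this requires carefully tracking how the unpaired middle vector interacts with the component-group element $\tilde z_i$ and with $F'$, and is where a sign computation in the bilinear form must be done with care rather than waved through.
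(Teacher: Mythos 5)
The paper does not actually prove this proposition: its entire proof is the citation ``It is exactly \cite[Proposition 3.3(i)]{sho83}.'' So the relevant comparison is between your sketch and Shoji's argument, and in broad outline you have reconstructed the right strategy --- parametrize the irreducible components of $\cB_N$ \`a la Spaltenstein, observe that the $F'$-action on that index set is controlled by the isometry types of the forms $Q^i_{F'}$ on the even-$i$ pieces, and then check triviality of the action part by part.

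However, as a self-contained proof your proposal has a genuine gap: the two claims that constitute the entire content of the proposition are asserted rather than derived. You state that ``the key combinatorial fact is a parity phenomenon'' distinguishing $m_i$ even from $m_i$ odd, and that in the odd case ``a uniform twist in all of them is absorbed, while a discrepancy between two of them genuinely moves a component,'' but you never exhibit the component parametrization or compute the $F'$-action on it; every place where this would happen is flagged with ``I would make this precise by\ldots''. In particular, the mechanism you offer for the $m_i$ odd case --- an ``extra $\bZ/2$ of ambiguity coming from the middle basis vector'' --- is not the right explanation as stated: the relevant fact is that when $m_i$ is odd the element $\tilde z_i$ lies in $Z_{\tilde G}(N)$ but the condition $\sum a_i \in 2\bN$ cutting out $A(\lambda)\subset\tilde A(\lambda)$ means only \emph{products} of an even number of such twists act on $\cB_N$ through $G$; this is why only the simultaneous (non-)splitness of the $Q^i_{F'}$ across the odd-multiplicity even parts matters, whereas for $m_i$ even the twist by $z_i$ alone already lies in $A(\lambda)$ and each $Q^i_{F'}$ must individually be split. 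Without making that identification precise (or carrying out the component computation you defer), the two bullets of the proposition are restated, not proved. If your intent was to cite Shoji --- as the paper does --- that is legitimate; if the intent was an independent proof, the combinatorial core still needs to be supplied.
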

\begin{proof} It is exactly \cite[Proposition 3.3(i)]{sho83}.
\end{proof}

\subsection{Rational nilpotent orbits}
Here we describe rational nilpotent orbits in $\fg^{F'}$ where $F' = \Ad_g\circ F$ for some $g\in G$. First of all, we define the notion of a rational standard model. 
\begin{defn} Assume that $N$ is of Jordan type $\lambda = (1^{m_1}2^{m_2} \cdots)$. Then $(N, \{\vv^i_{s,t}\}_{i,s,t}, \{\tilde{z}_i\}_i, F')$ is called a rational standard model (for $\lambda$) if $(N, \{\vv^i_{s,t}\}_{i,s,t}, \{\tilde{z}_i\}_i)$ is a standard model (for $\lambda$) and $F'(a\vv^i_{s,t}) = a^q\vv^i_{s,t}$ for any $i \in \bZ_{>0}, 1\leq s\leq i, 1\leq t\leq m_i$, and $a \in \Fqbar$. (In particular, $F'(\vv^i_{s,t})=\vv^i_{s,t}, F'(N)=N$, and $\tilde{z}_i\circ F'=F'\tilde\circ{z}_i$.)
\end{defn}

Now we describe rational nilpotent orbits in $\fg^{F'}$. If $N \in \fg$ is $F'$-stable, then $(G\cdot N)^{F'}$ splits into rational orbits, i.e. orbits of $G^{F'}$. Suppose that $(N, \{\vv^i_{s,t}\}_{i,s,t}, \{\tilde{z}_i\}_i, F')$ is a rational standard model for $\lambda$. In particular, $N$ is split with respect to $F'$. For any $g \in G$, if $\Ad_g(N)$ is $F'$-stable then $g^{-1}F'({g}) \in Z_G(N),$ and there exists a bijective correspondence
$$\{\textup{rational orbits in } (G\cdot N)^{F'}\} \leftrightarrow A(\lambda)$$
where to each rational orbit $G^{F'}\cdot \Ad_g(N) \subset (G\cdot N)^{F'}$ we assign the image of $g^{-1}F'({g})$ under  $Z_G(N) \twoheadrightarrow A(N) \simeq A(\lambda)$. (This is the result of Lang-Steinberg theorem, see \cite[Proposition 3.21]{dm91}.)

From now on, we say that $N$ is untwisted with respect to $F'$ if there exists a rational standard model $(N, \{\vv^i_{s,t}\}_{i,s,t}, \{\tilde{z}_i\}_i, F')$. Then it is clear that such an element exists and the set of untwisted elements with respect to $F'$ is a rational orbit. (An untwisted nilpotent element is split but not vice versa in general, see \cite[p.381]{sho97}.) Also for any $z\in A(\lambda)$ we say that $N$ is twisted by $z$ with respect to $F'$ if its rational orbit is parametrized by $z$ under the correspondence above. In particular, an untwisted nilpotent element is twisted by the identity. 

In fact, splitness of each $Q^i_{F'}$ determines the rational orbit of $N$. We have
\begin{prop} \label{prop:twisted} Let $N\in\fg$ be an $F'$-stable nilpotent element of Jordan type $(1^{m_1}2^{m_2} \cdots).$ Set $I =\{ i \in \bZ_{>0} \mid i \textup{ even, } m_i >0, Q^i_{F'} \textup{ non-split}\}$. Then $N$ is twisted by $\prod_{i \in I}z_i$ with respect to $F'$. \end{prop}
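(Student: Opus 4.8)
The plan is to reduce to the most transparent representative of the rational orbit of $N$ and then detect the splitness of each $Q^i_{F'}$ by a single determinant computation. By the parametrization of rational orbits recalled above, $N$ is twisted by some $z=\prod_j z_j^{a_j}\in A(\lambda)$ with $a_j\in\{0,1\}$; set $I'\colonequals\{j\mid a_j=1\}$, so every $j\in I'$ is even with $m_j>0$ and $z=\prod_{j\in I'}z_j$. Fix a rational standard model $(N_0,\{\vv^i_{s,t}\}_{i,s,t},\{\tilde z_i\}_i,F')$ for $\lambda$ and put $\tilde z\colonequals\prod_j \tilde z_j^{a_j}\in Z_G(N_0)$. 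By Lang's theorem for the connected group $G$ (\cite[Proposition 3.21]{dm91}) there is $h\in G$ with $h^{-1}F'(h)=\tilde z$; then $N_1\colonequals\Ad_h N_0$ is $F'$-stable and, under that bijection, its rational orbit corresponds to $z$, so $N_1$ lies in the same $G^{F'}$-orbit as $N$. Since the isometry class of $Q^i_{F'}$ depends only on the $G^{F'}$-orbit of the nilpotent element (and, as in \cite{sho83}, not on the auxiliary decomposition $V=\bigoplus V_i$), it suffices to prove the statement for $N_1$; that is, one must show that $I'$ equals $\{i\mid i\textup{ even},\ m_i>0,\ Q^i_{F'}\textup{ non-split}\}$.

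Next I would transport everything through $h$, which we also regard as the automorphism of $V$ given by the defining representation. Take $V=\bigoplus_j h(V_j)$ as the decomposition for $N_1$. Then $\ker_i N_1=h(\ker_i N_0)$; the form $Q^i$ for $N_1$, pulled back along $h\colon\ker_i N_0\to\ker_i N_1$, is the form $Q^i$ for $N_0$ (since $h$ preserves $\br{\ ,\ }$); and for $v\in\ker_i N_0$ one computes $F'(hv)=h\bigl((\tilde z\circ F')(v)\bigr)=h\bigl((\tilde z_i^{a_i}\circ F')(v)\bigr)$, using that $\tilde z_j$ acts trivially on $V_i$ for $j\neq i$ and that $\tilde z$ preserves $\ker_i N_0$. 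Hence the form $Q^i_{F'}$ for $N_1$ is isometric over $\Fq$ to the restriction of $Q^i$ (for $N_0$) to the subspace of $\ker_i N_0$ fixed by $\tilde z_i^{a_i}\circ F'$. If $a_i=0$ this is $\bigl((\ker_i N_0)^{F'},Q^i\bigr)$, which is split by \ref{subsec:QF} since $F'(\vv^i_{s,i})=\vv^i_{s,i}$. If $a_i=1$ (so $i$ is even with $m_i>0$) it is the twist of that split quadratic space by $\tilde z_i|_{\ker_i N_0}$, which lies in the orthogonal group of $Q^i$ because $\tilde z_i$ commutes with $N_0$ and preserves $\br{\ ,\ }$.

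The heart of the matter is then a determinant computation together with the behaviour of the discriminant under twisting. From the definition of $\tilde z_i$ in Section \ref{sec:adapted}, on the basis $\{\vv^i_{s,i}\}_{1\le s\le m_i}$ of $\ker_i N_0$ the map $\tilde z_i$ negates one basis vector when $m_i$ is odd and transposes two basis vectors when $m_i$ is even, so in either case $\det(\tilde z_i|_{\ker_i N_0})=-1$. It then remains to quote the standard fact that twisting a split even-dimensional quadratic space over $\Fq$ by an orthogonal transformation of determinant $-1$ yields the non-split space: choosing $P$ with $F'(P)=(\tilde z_i|_{\ker_i N_0})^{-1}P$ by Lang's theorem, the Gram matrix of the twisted form in a suitable $\Fq$-basis is $P^{\top}M P$ with $M$ a Gram matrix of $Q^i$, so its discriminant differs from that of $Q^i$ by $(\det P)^2$, and $(\det P)^2$ is a non-square in $\Fq^\times$ exactly because $\det(\tilde z_i|_{\ker_i N_0})=-1$. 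Therefore $Q^i_{F'}$ is non-split precisely when $a_i=1$, whence $I'=\{i\mid i\textup{ even},\ m_i>0,\ Q^i_{F'}\textup{ non-split}\}=I$ and $z=\prod_{i\in I}z_i$, as claimed. I expect the main obstacle to be this last step — making the bookkeeping between the twisted Frobenius $\tilde z_i\circ F'$, Lang's theorem, and the resulting discriminant precise — while the reduction and the transport in the first two paragraphs are routine.
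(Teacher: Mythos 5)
Your argument is correct, but it is genuinely different from what the paper does: the paper's proof of this proposition is a one-line citation to Shoji's analysis of rational orbits (\cite[\S 3]{sho83}), whereas you reconstruct a self-contained proof from the ingredients already set up in Sections 3 and 5. Your route — pick the untwisted base point $N_0$, realize the orbit twisted by $z$ as $\Ad_h N_0$ with $h^{-1}F'(h)=\tilde z$ via Lang's theorem, transport $Q^i_{F'}$ back to $((\ker_i N_0)^{\tilde z_i^{a_i}F'},Q^i)$, and then detect the twist by the sign of $\det(\tilde z_i|_{\ker_i N_0})$ through a second application of Lang's theorem and the discriminant criterion of \ref{subsec:QF} — is clean and matches the spirit of the computations the paper later performs in Section \ref{sec:rect} (e.g.\ Lemma \ref{lem:twistex} is exactly the special case your determinant computation quantifies). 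What the citation buys the paper is uniformity (Shoji's treatment covers the orthogonal groups and the independence of $Q^i_{F'}$ from the auxiliary decomposition $V=\bigoplus V_i$ in one place); what your argument buys is transparency and a proof that visibly uses only $q\equiv 1\pmod 8$ and the determinant criterion already recorded in the paper. Two small points to tidy: you do lean on the fact that the isometry class of $Q^i_{F'}$ is independent of the chosen orthogonal decomposition (this is genuinely needed and should be either proved via the canonical model of $Q^i$ on $(\ker N\cap\im N^{i-1})/(\ker N\cap\im N^{i})$ or cited precisely); and your phrase ``split \emph{even-dimensional} quadratic space'' is inaccurate since for $G=Sp_{2n}$ and $i$ even the multiplicity $m_i$ may be odd --- fortunately your actual computation (discriminant multiplied by the non-square $(\det P)^2$, with $\det P\notin\Fq$ because $(\det P)^q=-\det P$) is insensitive to the parity of $m_i$, given the paper's convention that ``split'' in odd dimension also means ``determinant a quadratic residue.''
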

\begin{proof} It is easily deduced from \cite[\textsection 3]{sho83}. (See also \cite[3.2]{sho97}.)
\end{proof}
Combined with the argument in \ref{subsec:QF} we have the following corollary. Note that here we do not need the stronger assumption that $F'$ stabilizes all the elements in $\fB$.
\begin{cor} \label{cor:untwisted}For a basis $\fB$ adapted to $N$, if $F'$ stabilizes both $N$ and each element of $\fB \cap \ker N$ then $N$ is untwisted with respect to $F'$.
\end{cor}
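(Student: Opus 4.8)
The plan is to reduce Corollary \ref{cor:untwisted} to Proposition \ref{prop:twisted} by showing that, under the stated hypothesis, every quadratic form $Q^i_{F'}$ (for even $i$ with $m_i>0$) is split; then $I=\emptyset$, so $N$ is twisted by the identity, which is precisely the statement that $N$ is untwisted. The point is that the conclusion of Proposition \ref{prop:twisted} depends on $F'$ only through the isometry classes of the forms $Q^i_{F'}$, and these classes are detected by a determinant computed in \emph{any} $F'$-fixed basis of $(\ker_i N)^{F'}$.

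First I would fix a basis $\fB$ adapted to $N$ satisfying the hypothesis, and pass to the orthogonal decomposition $V=\bigoplus_i V_i$ of \ref{subsec:decomp} induced by $\fB$; since $N$ is $F'$-stable and $\fB$ is (up to permutation) of the form $\{\vv^i_{s,t}\}$, each $V_i$ is $F'$-stable, so it suffices to work one summand at a time and I may assume $\lambda=(i^m)$ with $i$ even. Here $\fB\cap\ker N=\{\vv_{s,i}\mid 1\le s\le m\}$, and the hypothesis says $F'(\vv_{s,i})=\vv_{s,i}$ for all $s$. Thus $\{\vv_{s,i}\}_s$ is an $\Fq$-basis of $(\ker N)^{F'}$, and under the identification $\ker N\simeq\Fqbar^m$ of \ref{subsec:Q} the form $Q_{F'}$ is given on $\Fq$-coordinates by exactly the explicit equation displayed there (the $2x_1x_m+\cdots$ expression, with a middle square when $m$ is odd).

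Next I would compute $\det Q_{F'}$ from that explicit equation: the associated symmetric matrix is (a scalar multiple of) the anti-diagonal matrix, so its determinant is $\pm 2^m$ up to a sign that is a power of $-1$ (and, when $m$ is odd, an extra factor from the middle $(x_{(m+1)/2})^2$ term). Since $2$ and $-1$ are both quadratic residues modulo $q$ by the standing assumptions of Section \ref{sec:setup}, this determinant is a quadratic residue, hence $Q_{F'}$ is split by the classification recalled in \ref{subsec:QF}. This is in fact the same computation already invoked in \ref{subsec:QF} to conclude splitness of $Q^i_{F'}$ when $F'$ fixes all the $\vv^i_{s,i}$; the only new observation needed here is that the hypothesis of the corollary supplies exactly the $F'$-fixed vectors required, namely $\fB\cap\ker N$, and nothing about $F'$ on the rest of $\fB$ enters.

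Finally, reassembling over all $i$: for every even $i$ with $m_i>0$ the form $Q^i_{F'}$ is split, so the index set $I$ of Proposition \ref{prop:twisted} is empty and $N$ is twisted by $\prod_{i\in I}z_i=id$ with respect to $F'$; by the definition of ``twisted by $z$'' this means $N$ lies in the rational orbit parametrized by the identity, i.e.\ $N$ is untwisted with respect to $F'$. The only mild subtlety — and the step I would be most careful about — is the bookkeeping needed to see that the hypothesis ``$F'$ stabilizes $N$ and each element of $\fB\cap\ker N$'' really does pin down $F'$ on $(\ker_i N)^{F'}$ to the $\Fq$-span of the $\vv^i_{s,i}$ for each $i$ separately (so that the explicit determinant computation applies verbatim), even though $F'$ may act nontrivially on the other adapted basis vectors; this is where the orthogonality of the decomposition $V=\bigoplus V_i$ and the fact that $Q^i$ only sees $\ker_i N$ do the work. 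Everything else is a direct appeal to Proposition \ref{prop:twisted} and the determinant criterion of \ref{subsec:QF}.
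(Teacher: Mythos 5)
Your proof is correct and is essentially the paper's own argument: the paper obtains the corollary by combining the splitness computation of \ref{subsec:QF} (each $Q^i_{F'}$ is split once $F'$ fixes the adapted basis vectors of $\ker_i N$, by the determinant criterion) with Proposition \ref{prop:twisted}, which then gives the trivial twist. Your extra care about $F'$ acting arbitrarily on the rest of $\fB$ matches the paper's remark that the stronger hypothesis is not needed.
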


Let us give more examples of twisted nilpotent elements. For simplicity, suppose $N\in \fg$ is a nilpotent element of Jordan type $(i^{m})$ for some even $i$ and assume that $(N, \{\vv_{s,t}\}_{s,t}, \{\tilde{z}\})$ is a standard model. We further assume that $F'(N)=N$ for some Frobenius $F'$, but we do not assume that each $\vv_{s,i}$ is $F'$-stable. Then we have the following.
\begin{lem} \label{lem:twistex} Let $z\in A(i^m)$ be the image of $\tilde{z}$ under $Z_G(N) \twoheadrightarrow A(N) \simeq A(i^m)$.
\begin{enumerate} 
\item Suppose that $m$ is odd and let $j = \frac{m+1}{2}$. If $F'(\vv_{s,i})=\vv_{s,i}$ for $i \neq j$ and $F'(\vv_{j,i})=-\vv_{j,i}$, then $N$ is twisted by $z$ with respect to $F'$.
\item If there exists $1\leq j < \frac{m+1}{2}$ such that $F'(\vv_{s,i})=\vv_{s,i}$ for $i \neq j$ or $m+1-j$, $F'(\vv_{j,i})=\vv_{m+1-j,i}$, and $F'(\vv_{m+1-j,i})=\vv_{j,i}$, then $N$ is twisted by $z$ with respect to $F'$.
\item In general, if $J \subset \{1, 2, \cdots, \floor{\frac{m+1}{2}}\}$ is the set of $j$ such that $F'$ acts on $\vv_{j,i}$ as in (1) or (2), then $N$ is twisted by $z^{|J|}$ with respect to $F'$.
\end{enumerate}
\end{lem}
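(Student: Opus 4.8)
The plan is to reduce everything to the computation of a single quadratic form's discriminant. Recall from Proposition \ref{prop:twisted} that the rational orbit of an $F'$-stable nilpotent element $N$ of Jordan type $(i^m)$ with $i$ even is determined by whether the quadratic form $Q_{F'}$ on $(\ker N)^{F'}$ is split or non-split: $N$ is untwisted (twisted by $\prod_{\emptyset}$) if $Q_{F'}$ is split, and twisted by $z$ otherwise. So parts (1), (2), and (3) all amount to computing $\det Q_{F'}$ modulo squares in $\Fq^\times$, using the explicit formulas for $Q$ in terms of the adapted basis given in \ref{subsec:Q}.

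The key steps are as follows. First I would set up coordinates: identify $\ker N$ with $\Fqbar^m$ via the basis $\{\vv_{s,i}\}_{1\le s\le m}$, so that $Q$ is the explicit form written in \ref{subsec:Q} (the ``antidiagonal pairing'' form, with an extra middle square $(x_{(m+1)/2})^2$ when $m$ is odd). Next, for each case I would determine $(\ker N)^{F'}$ and restrict $Q$ to it, then compute the discriminant. In case (1), $F'$ negates only $\vv_{j,i}$ with $j=(m+1)/2$; the $F'$-fixed subspace is spanned by $\{\vv_{s,i}+\vv_{m+1-s,i} : s<j\}$ together possibly with a multiple of $\ii\vv_{j,i}$ (using $\ii=\sqrt{-1}$, which is available by our standing assumption), and one checks that replacing the $+1$ middle coefficient by the value of $Q$ on $\ii\vv_{j,i}$, which is $-1$, multiplies the determinant by $-1$ — but $-1$ is itself a square by assumption, so one must look more carefully: the point is that over $\Fq$ the hyperbolic pairs $2x_sx_{m+1-s}$ contribute squares to the determinant, and the anisotropic part gets twisted. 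I would compare directly against the untwisted case (where all $\vv_{s,i}$ are $F'$-fixed, giving a split form by \ref{subsec:QF}) and track the ratio of determinants, which is the only thing that matters. Case (2) is similar: pairing up $\vv_{j,i}\leftrightarrow\vv_{m+1-j,i}$ under $F'$ turns a split hyperbolic plane $2x_jx_{m+1-j}$ into its Galois-twisted form, which changes the discriminant class by the norm-coset, i.e. makes that plane non-split, again flipping split/non-split. Case (3) follows by iterating: each index $j\in J$ independently contributes a factor, so $N$ is twisted by $z^{|J|}$, and since $A(i^m)$ has order at most $2$ this is $z$ if $|J|$ is odd and the identity if $|J|$ is even.

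More precisely, for (3) I would argue that $Q_{F'}$ is split if and only if $|J|$ is even: each modification of type (1) or (2) at an index in $J$ multiplies $\det Q_{F'}$ by a fixed non-square class (a representative of $\Fq^\times/(\Fq^\times)^2$ coming from the relevant norm computation), so after $|J|$ such modifications applied to the split baseline of \ref{subsec:QF}, the form is split precisely when $|J|$ is even. Then Proposition \ref{prop:twisted} gives that $N$ is twisted by $z$ when $|J|$ is odd and untwisted when $|J|$ is even, i.e. twisted by $z^{|J|}$ in all cases, which also subsumes (1) and (2) as the special cases $J=\{(m+1)/2\}$ and $J=\{j\}$.

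The main obstacle will be the bookkeeping in the discriminant computation: one has to be careful about which pairs $2x_sx_{m+1-s}$ remain hyperbolic over $\Fq$ after taking $F'$-invariants, what the middle term contributes when $m$ is odd, and the precise norm-coset picked up when a hyperbolic plane is replaced by its Galois twist — all modulo the standing assumptions that $2$ and $-1$ are squares in $\Fq$, which simplify but do not eliminate the computation. I expect the cleanest route is not to compute $\det Q_{F'}$ absolutely but only relative to the untwisted standard model: fix the untwisted $N$ (all $\vv_{s,i}$ fixed, $Q_{F'}$ split), and for each $j\in J$ show the local modification at $\{j,m+1-j\}$ (or at the middle index) changes the Witt class of $Q_{F'}$ by exactly one hyperbolic-vs-anisotropic plane, hence toggles splitness. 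Everything else is the routine direct calculation already invoked in \ref{subsec:QF} and Proposition \ref{prop:twisted}.
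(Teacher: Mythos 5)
Your proposal is correct and is essentially the paper's argument: the paper proves this lemma in one line by citing Proposition \ref{prop:twisted}, i.e.\ precisely the reduction to splitness of $Q_{F'}$ (relative to the split baseline of \ref{subsec:QF}) that you carry out, with each index in $J$ toggling the discriminant class. One local correction in your case (1): the vectors $\vv_{s,i}$ with $s\neq j$ are individually $F'$-fixed (no symmetrization needed), and the fixed line in $\spn{\vv_{j,i}}$ is spanned by $\alpha\vv_{j,i}$ with $\alpha^{q-1}=-1$ rather than $\ii\vv_{j,i}$ (since $\ii\in\Fq$ here), so the middle coefficient becomes $Q(\alpha\vv_{j,i})=\alpha^{2}$, a non-square in $\Fq^{\times}$, which is exactly what flips splitness.
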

\begin{proof} It follows from Proposition \ref{prop:twisted}.
\end{proof}

\begin{rmk} When $G=SO_{2n+1}$ or $G=SO_{2n}$, we instead consider $\tilde{G}$-orbits on $\fg$ and replace $A(N)$ with $\tilde{A}(N)$, $A(\lambda)$ with $\tilde{A}(\lambda)$, etc. Then the statement in this section is still valid if one switch the conditions of $i$ being even and $i$ being odd. 
\end{rmk}

\section{Partial Springer resolution} \label{sec:partial}
In this section, we develop a tool from the theory of partial Springer resolution studied in \cite{bm83}. It will be crucial for the proof of our main theorem. For a partial flag variety $\cP$ of $G$, we define
$$\tilde{\fg}^\cP \colonequals \{ (x, P) \in \fg \times \cP \mid x \in \Lie P\}$$
and also set $\tilde{\fg} \colonequals \tilde{\fg}^\cB$. Then the Grothendieck-Springer resolution $\pi: \tilde{\fg} \rightarrow \fg$ factors through $\tilde{\fg} \xrightarrow{\eta} \tilde{\fg}^\cP \xrightarrow{\xi} \fg$ where $\eta: \tilde{\fg} \rightarrow \tilde{\fg}^\cP$ is the natural projection. Let $W_\cP$ be the Weyl group of some/any parabolic subgroup $P \subset \cP$, considered as a corresponding parabolic subgroup of $W$. According to \cite[2.13]{bm83}, the complex $\eta_*\qlbar$ is equipped with a $W_\cP$-action such that its induced action on $\xi_*(\eta_*\qlbar) = \pi_* \qlbar$ coincides with the restriction of the usual Springer $W$-action.

For $x\in \fg$, let $\cB_x \xrightarrow{\eta_x} \cP_x \xrightarrow{\xi_x} \{x\}$ be the restriction of $\tilde{\fg} \xrightarrow{\eta} \tilde{\fg}^\cP \xrightarrow{\xi} \fg$ at $x$. Then $\cB_x$ is the Springer fiber of $x$ and
$\eta_{x*} \qlbar_{\cB_x}$  is equipped with a $W_\cP$-action which is compatible with the Springer $W$-action on $(\xi_{x} \circ\eta_{x})_* \qlbar_{\cB_x}$, i.e. cohomology groups of $\cB_x$. This implies that the Leray spectral sequence
$$H^i(\cP_x, \cH^j \eta_{x*} \qlbar_{\cB_x}) \Rightarrow H^{i+j} (\cB_x)$$
is a spectral sequence of $W_\cP$-modules where the $W_\cP$-action on $H^{i+j} (\cB_x)$ is given by the restriction of the usual Springer $W$-action. (See \cite[Theorem 5.1]{hs79} for its proof in characteristic 0 case.)

Let $N\in \fg$ be an $F$-stable nilpotent element.
For $z \in A(N)$, let us choose its representative $\tilde{z} \in Z_G(N)$ which is $F$-stable. (It is always possible by Lang-Steinberg theorem, see e.g. \cite[Corollary 3.13]{dm91}.) Also let $w\in W_\cP$. Our goal is to calculate $\tr(wzF^*, H^*(\cB_N))$. By the spectral sequence above, it is the same as
$$\sum_{i,j \in \bZ}(-1)^{i+j}\tr(w\tilde{z}^*F^*, H^i(\cP_N, \cH^j \eta_{N*} \qlbar_{\cB_N}))= \sum_{j\in \bZ}(-1)^{j}\tr(w\tilde{z}^*F^*, H^*(\cP_N, \cH^j \eta_{N*} \qlbar_{\cB_N})).$$

There is a canonical isomorphism $F^*: F^*\cH^j\eta_{N*} \qlbar_{\cB_N} \xrightarrow{\simeq} \cH^j\eta_{N*} \qlbar_{\cB_N} $ which comes from the natural $\bF_q$-structure of $\eta_{N*} \qlbar_{\cB_N}$. We consider the Weil sheaf $(\cH^j\eta_{N*} \qlbar_{\cB_N}, w\tilde{z}^*F^*)$, i.e. $\eta_{N*} \qlbar_{\cB_N}$ with the fixed isomorphism
$$w\tilde{z}F^*: F^*\cH^j\eta_{N*} \qlbar_{\cB_N} \xrightarrow{F^*} \cH^j\eta_{N*} \qlbar_{\cB_N} \xrightarrow{w\tilde{z}^*}\cH^j\eta_{N*} \qlbar_{\cB_N}.$$
Here the underlying geometric Frobenius morphism is given by $\tilde{z}F = \Ad_{\tilde{z}}\circ F: \cB_N \rightarrow \cB_N$. 
By the Grothendieck trace formula for Weil sheaves \cite[Corollary 1.5]{kw01}, we have
\begin{align*}
\tr(w\tilde{z}^*F^*, H^*(\cP_N, \cH^j \eta_{N*} \qlbar_{\cB_N}))&= \sum_{l \in (\cP_N)^{\tilde{z}F}} \tr(w\tilde{z}^*F^*, (\cH^j\eta_{N*} \qlbar_{\cB_N})_l) 
\\&= \sum_{l \in (\cP_N)^{\tilde{z}F}} \tr(w\tilde{z}^*F^*, H^j(\eta_N^{-1}(l)))
\end{align*}
Note that $\eta_N^{-1}(l)$ is the Springer fiber of $N$ with respect to the stabilizer of $l$ in $G$.

Therefore, it follows that
\begin{align*}
\tr(wzF^*, H^*(\cB_N)) &= \sum_{j\in \bZ}(-1)^{j}\sum_{l \in (\cP_N)^{\tilde{z}F}} \tr(w\tilde{z}^*F^*, H^j(\eta_N^{-1}(l)))
\\&= \sum_{l \in (\cP_N)^{\tilde{z}F}} \tr(w\tilde{z}^*F^*, H^*(\eta_N^{-1}(l))).
\end{align*}
In particular, if $\cP$ is the Grassmannian of isotropic lines, then $W_\cP = W'$, $\cP_N=\bP(\ker N)$, and $\eta^{-1}_N(l)$ for each $l \in \cP$ is the Springer fiber of $N|_{l^\perp/l}$. (Recall the definition of $W'$ in Section \ref{sec:setup}.) Thus for any $w \in W'$ we have
\begin{align*}
\tr(wzF^*, H^*(\cB_N))  = \sum_{l \in \bP(\ker N)^{\tilde{z}F}} \tr(w\tilde{z}^*F^*, H^*(\cB'_{N'}))
\end{align*}
where $\cB'_{N'}=\eta^{-1}_N(l)$ is the Springer fiber of $N' = N|_{l^\perp/l}$ with respect to $Sp(l^\perp/l)$ and the $W'$-action on $H(\cB'_{N'})$ is given by the Springer action of $W'$ considered as the Weyl group of $Sp(l^\perp/l)$.

Now assume that $N$ is split of Jordan type $\lambda$. Then by \cite[(4.1.2)]{sho83} (see also \cite[Lemma 6.3]{spr76}) we have
$$\tr(wzF^*, H^*(\cB_N)) = \tspq{\lambda, z}(w).$$
Similarly, if $\lambda(l)$ is the Jordan type of $N' = N|_{l^\perp/l}$ and if $N'$ is twisted by $z(l) \in A(\lambda(l))$ with respect to $\tilde{z}F$, then
$$ \tr(w\tilde{z}^*F^*, H^*(\cB'_{N'})) = \tspq{\lambda(l), z(l)}(w)$$
We summarize this as follows. This will be our primary tool to prove the main theorem.
\begin{prop}\label{prop:maintool} 
Suppose that $N \in \fg$ is an $F$-stable nilpotent element of Jordan type $\lambda$ which is split. Also suppose that $\tilde{z}\in Z_G(N)^F$ maps to $z\in A(\lambda)$ under $Z_G(N) \rightarrow A(N) \simeq A(\lambda)$. Then we have
\begin{align*}
\Res^W_{W'} \tspq{\lambda, z} = \sum_{l \in \bP(\ker N)^{\tilde{z}F}}  \tspq{\lambda(l), z(l)}
\end{align*}
where $\lambda(l)$ is the Jordan type of $N|_{l^\perp/l}$ and $z(l)$ is the element in $A(\lambda(l))$ such that $N|_{l^\perp/l}$ is twisted by $z(l)$ with respect to $\tilde{z}F$.
\end{prop}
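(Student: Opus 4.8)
The plan is to specialize the partial-Springer-resolution machinery recalled above to the case where $\cP$ is the Grassmannian of isotropic lines in $V$, so that $W_\cP = W'$, the base $\cP_N = \bP(\ker N)$ (every line is isotropic for a symplectic form), and for $l \in \bP(\ker N)$ the fiber $\eta_N^{-1}(l)$ is the Springer fiber of the induced nilpotent $N|_{l^\perp/l}$ on $l^\perp/l$. By \cite[2.13]{bm83} the complex $\eta_{N*}\qlbar_{\cB_N}$ carries a $W'$-action whose induced action on $H^*(\cB_N)$ is the restriction of the Springer $W$-action, and the Leray spectral sequence $H^i(\cP_N, \cH^j\eta_{N*}\qlbar_{\cB_N}) \Rightarrow H^{i+j}(\cB_N)$ is a spectral sequence of $W'$-modules.

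First I would fix, using Lang--Steinberg \cite[Corollary 3.13]{dm91}, an $F$-stable representative $\tilde z \in Z_G(N)^F$ of $z$, so that the commuting endomorphism $\tilde z^* F^*$ acts on the whole spectral sequence. Since Euler characteristics are insensitive to the spectral sequence, for $w \in W'$ one obtains
\[
\tr(wzF^*, H^*(\cB_N)) = \sum_{j \in \bZ}(-1)^j \tr\bigl(w\tilde z^* F^*,\, H^*(\cP_N, \cH^j\eta_{N*}\qlbar_{\cB_N})\bigr).
\]
Next I would regard $\cH^j\eta_{N*}\qlbar_{\cB_N}$ as a Weil sheaf with underlying geometric Frobenius $\tilde z F = \Ad_{\tilde z}\circ F$ and the lift of $w\tilde z^* F^*$ described above, and apply the Grothendieck trace formula for Weil sheaves \cite[Corollary 1.5]{kw01}. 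This rewrites each inner trace as a finite sum over $l \in (\cP_N)^{\tilde z F} = \bP(\ker N)^{\tilde z F}$ of $\tr(w\tilde z^* F^*, (\cH^j\eta_{N*}\qlbar_{\cB_N})_l) = \tr(w\tilde z^* F^*, H^j(\eta_N^{-1}(l)))$. Interchanging the two sums and recombining the alternating sum over $j$ into $H^*(\eta_N^{-1}(l))$ gives
\[
\tr(wzF^*, H^*(\cB_N)) = \sum_{l \in \bP(\ker N)^{\tilde z F}} \tr\bigl(w\tilde z^* F^*,\, H^*(\cB'_{N'})\bigr),
\]
where $\cB'_{N'}$ is the Springer fiber of $N' = N|_{l^\perp/l}$ with respect to $Sp(l^\perp/l)$.

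Finally I would translate both sides into $\textup{\textbf{TSp}}$-polynomials. Since $N$ is split, \cite[(4.1.2)]{sho83} (equivalently \cite[Lemma 6.3]{spr76}) identifies the left side with $\tspq{\lambda, z}(w)$. On the right, for each $l$ the map $\tilde z F$ acts on $\cB'_{N'}$ as a geometric Frobenius of the form $\Ad_h \circ F$ for $Sp(l^\perp/l)$, with respect to which $N'$ is stable; by Proposition~\ref{prop:twisted} its rational orbit is parametrized by a well-defined $z(l) \in A(\lambda(l))$, and the same result of Shoji gives $\tr(w\tilde z^* F^*, H^*(\cB'_{N'})) = \tspq{\lambda(l), z(l)}(w)$. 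Collecting these identities for all $w \in W'$ yields the asserted equality of characters of $W'$.

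I expect the main obstacle to be the compatibility of group actions and Frobenius structures under the partial resolution $\eta$: one must verify that the $W'$-action that the Borho--MacPherson construction puts on $H^*(\eta_N^{-1}(l))$ coincides with the intrinsic Springer action of $W'$ viewed as the Weyl group of the \emph{smaller} group $Sp(l^\perp/l)$, and that $\tilde z F$ genuinely restricts to a Frobenius of the shape $\Ad_h \circ F$ on $\cB'_{N'}$ so that "$N'$ twisted by $z(l)$ with respect to $\tilde z F$" — and hence the polynomial $\tspq{\lambda(l), z(l)}$ — is meaningful. These points rest on the functoriality built into \cite[2.13]{bm83} and on the identification of the stalk $\eta^{-1}_N(l)$ as a Springer fiber for $Sp(l^\perp/l)$; the remaining bookkeeping of $\bF_q$-structures on $\eta_{N*}\qlbar_{\cB_N}$ and on its stalks is routine and introduces no new ideas.
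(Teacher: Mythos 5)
Your proposal follows essentially the same route as the paper: specializing the Borho--MacPherson partial resolution to the Grassmannian of isotropic lines, running the Leray spectral sequence of $W'$-modules, applying the Grothendieck trace formula for the Weil sheaf $(\cH^j\eta_{N*}\qlbar_{\cB_N}, w\tilde z^*F^*)$, and converting both sides via Shoji's identity $\tr(wzF^*, H^*(\cB_N)) = \tspq{\lambda,z}(w)$ for split $N$. The argument is correct as stated and matches the paper's proof step for step.
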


\begin{rmk} Here we consider the case when $G=SO_{2n+1}$ or $SO_{2n}$.
\begin{enumerate}[label=\arabic*., leftmargin =*]
\item It is not necessarily true that $\cP_N = \bP(\ker N)$ especially when $N$ has a Jordan block of size $1$. However, the statement of Proposition \ref{prop:maintool} is still valid since if $l \in \bP(\ker N)$ is not isotropic then $\eta_N^{-1}(l)$ is empty.
\item The proposition above is still valid when $G=SO_{2n}$ and one of $\lambda$ or $\lambda(l)$ is very even. Indeed, if $\lambda$ is very even then $n$ is even, thus none of $\lambda(l)$ is very even. But in this case $\Res^W_{W'} \tspq{\lambda, z} = \Res^W_{W'} \tspq{\lambda+, z} = \Res^W_{W'} \tspq{\lambda-, z}$ for any $z\in A(\lambda)$, thus it does not matter whether the orbits of $N$ is parametrized by either $\lambda+$ or $\lambda-$. On the other hand, if one of $\lambda(l)$ is very even then $n$ is odd thus $\lambda$ is not very even. Therefore $\Res^W_{W'} \tspq{\lambda, z}$ is stable under the outer automorphism of $W'$ swapping $s_1$ and $s_2$, which means that the coefficients of $\tspq{\lambda(l)+, z(l)}$ and $\tspq{\lambda(l)-, z(l)}$ in the formula should be the same. Then the claim follows from the definition of $\tspq{\lambda(l),z(l)}$ for $\lambda(l)$ very even.
\end{enumerate}
\end{rmk}

\section{Rectangular case}\label{sec:rect}
We start with the case when the Jordan type of a nilpotent element is  $(i^m)$ for some $i, m \in \bZ_{>0}$, i.e. a rectangle. When $i$ is odd, let $(N, \{\vv_{s,t}\}_{s,t}, \emptyset, F)$ be a rational standard model for $(i^m)$. Similarly when $i$ is even, let $(N, \{\vv_{s,t}\}_{s,t}, \{\tilde{z}\}, F)$ be a rational standard model for $(i^m)$. In this section we relate $\Res^W_{W'}\tspq{(i^m), -}$ with $\tspq{((i-1)^2i^{m-2}), -}$ and $\tspq{((i-2)^1i^{m-1}), -}$ using Proposition \ref{prop:maintool}.

\subsection{$i$ odd} In this case $m$ is always even, and $A(N)$ is trivial. Also for any $l \in \bP(\ker N)$, $N|_{l^\perp/l}$ is always of Jordan type $((i-1)^2i^{m-2})$. For $v \in \ker N$ we write $v=x_1\vv_{1,i}+x_2\vv_{2,i}+\cdots + x_m\vv_{m,i}$ for some $x_1, \ldots, x_m \in \Fqbar$. Then we have a filtration
$$ \emptyset =X_m \subset X_{m-1}\subset \cdots \subset X_1 \subset X_0= \bP(\ker N)$$
where each $X_{j}$ is a subset of $\bP(\ker N)$ with the additional condition $x_1= x_2 = \cdots = x_j = 0$. 

Suppose $l \in \bP(\ker N)$ and $\spn{v} = l$ for some $v \in \ker N$. If $l \in X_{j-1} - X_j$ then $x_1= x_2 = \cdots = x_{j-1} = 0$ but $x_j \neq 0$. Thus without loss of generality we may set $x_j=1$. Then $l$ is $F$-stable if and only if $x_k \in \Fq$ for all $k \geq j+1$. We let 
\begin{gather*}
\ww_{s,t}^i \colonequals \vv_{s,t} - \vartheta_s x_{m+1-s} \vv_{m+1 - j, t} \quad \textup{ for } s\neq j \textup{ or } m+1-j, \textup{ and}
\\\ww^{i-1}_{1,t} \colonequals \vv_{j,t}+x_{j+1}\vv_{j+1,t}+\cdots+x_m \vv_{m,t}, \qquad \ww^{i-1}_{2,t}\colonequals \vv_{m+1-j,t+1}.
\end{gather*}
Here, $\vartheta_s=1$ if $s$ and $m+1-j$ are either both $\geq \frac{m}{2}$ or both $< \frac{m}{2}$, and $\vartheta_s=-1$ otherwise. Then direct calculation shows that
$$\fB\colonequals \{\ww^i_{s,t} \mid 1\leq s \leq m, s\neq j \textup{ or } m+1-j, 1\leq t \leq i\} \cup \{\ww^{i-1}_{s,t} \mid s=1 \textup{ or }2, 1\leq t \leq i-1\}$$
form a basis of $l^\perp/l$ adapted to $N|_{l^\perp/l}$, and each element in $\fB \cap \ker N|_{l^\perp/l}$ is $F$-stable. Thus by Corollary \ref{cor:untwisted}, $N|_{l^\perp/l}$ is untwisted with respect to $F$. 

Since $\# \bP(\ker N)^{F} =  ({q^m-1})/({q-1})$, by Proposition \ref{prop:maintool} we conclude that
$$\Res^W_{W'}\tspq{(i^m), id} = \frac{q^m-1}{q-1} \tspq{((i-1)^2i^{m-2}), id}.$$

\subsection{$i$ even, $m$ odd}  In this case $A(N) \simeq \bZ/2.$ We set $r \colonequals \frac{m+1}{2}$. Let $H \subset \bP(\ker N)$ be the hypersurface consisting of zeroes of $Q$ where $Q$ is defined as in \ref{subsec:Q}. Then the Jordan type of $N|_{l^\perp/l}$ is equal to $((i-1)^2 i^{m-2})$ (resp. $((i-2)^1 i^{m-1})$) if $l \in H$ (resp. $l \not\in H$).

For $v \in \ker N$ we write $v=x_1\vv_{1,i}+x_2\vv_{2,i}+\cdots + x_m\vv_{m,i}$ for some $x_1, \ldots, x_m \in \Fqbar$. Then $H$ is defined by the equation
$$2x_1x_m+2x_2x_{m-1}+ \cdots + 2x_{r-1}x_{r+1}+x_r^2=0.$$

\subsubsection{$l \in H$ case}
We define a filtration
$$ \emptyset =H_m \subset H_{m-1}\subset \cdots \subset H_{r+1} \subset H_{r} = H_{r-1} \subset H_{r-2} \subset \cdots \subset H_1 \subset H_0 = H$$
where $H_{j}$ is defined by the subset of $H$ with the additional condition $x_1= x_2 = \cdots = x_j = 0$. 

Suppose $l\in H$ and $\spn{v} =l$ for some $v \in \ker N$. If $l \in H_{j-1}- H_{j}$ then $j\neq r$ and $x_1= x_2 = \cdots = x_{j-1}= 0$ but $x_j \neq 0$. Thus without loss of generality we may set $x_j = 1$. First assume that $l$ is $F$-stable, i.e. $x_k \in \bF_q$ for all $k\geq j+1$. We let
\begin{gather*}
\ww_{s,t}^i \colonequals \vv_{s,t} - x_{m+1-s} \vv_{m+1 - j, t} \quad \textup{ if } s\neq j \textup{ or }m+1-j, \textup{ and}
\\\ww_{1,t}^{i-1} \colonequals  \vv_{j,t}+x_{j+1}\vv_{j+1,t}+\cdots+x_m \vv_{m,t}, \qquad \ww_{2,t}^{i-1} \colonequals \vv_{m+1-j,t+1}.
\end{gather*}
Then direct calculation shows that
$$\fB\colonequals \{\ww^i_{s,t} \mid 1\leq s \leq m, s\neq j \textup{ or } m+1-j, 1\leq t \leq i\} \cup \{\ww^{i-1}_{s,t} \mid s=1 \textup{ or }2, 1\leq t \leq i-1\}$$
is a basis of $l^\perp/l$ adapted to $N_{l^\perp/l}$, and each element in $\fB \cap \ker N|_{l^\perp/l}$ is $F$-stable. Thus by Corollary \ref{cor:untwisted}, $N|_{l^\perp/l}$ is untwisted with respect to $F$. Also note that $\#H^F = 1+q+\cdots  q^{m-2} = ({q^{m-1}-1})/({q-1})$.

This time we assume that $l \in H_{j-1}- H_{j}$ is $\tilde{z}F$-stable. This condition is equivalent to that $x_k \in \bF_q$ for all $k \neq r$ and $(x_{r})^q=-x_{r}$. (In particular, $x_r \in \Fqs$.) Then $\fB$ above is still adapted to $N|_{l^\perp/l}$ and each element in $\fB \cap \ker N|_{l^\perp/l}$ is $\tilde{z}F$-stable except that $\tilde{z}F(\ww^i_{r,i}) = -\vv_{r,i}+x_r \vv_{m+1-j,i} = -\ww^i_{r,i}.$ Thus by Lemma \ref{lem:twistex}, $N_{l^\perp/l}$ is twisted by $z_i \in A((i-1)^2 i^{m-2})$ with respect to $\tilde{z}F$. Also note that $\#H^{\tilde{z}F} = 1+q+\cdots  q^{m-2} = ({q^{m-1}-1})/({q-1})$.

\subsubsection{$l \not \in H$ case}
Define $C \colonequals \bP(\ker N) - H$. Similarly we have a filtration
$$ \emptyset =C_{r} \subset C_{r-1}\subset \cdots \subset C_1 \subset C_0 = C$$
where $C_{j}$ is defined by the subset of $C$ with the additional condition $x_1= x_2 = \cdots = x_j = 0$. We assume $l  \in C_{j-1} - C_j$ and $\spn{v} = l$ for some $v=x_1\vv_{1,i}+x_2\vv_{2,i}+\cdots + x_m\vv_{m,i} \in \ker N$. Then  $x_1= x_2 = \cdots = x_{j-1}= 0$ but $x_j \neq 0$. Thus without loss of generality we may put $x_j = 1$.

First we assume that $j \neq r$. We set 
\begin{align*}
\delta&\colonequals 2x_1 x_{m}+ 2x_2x_{m-1}+\cdots + 2x_{r-1}x_{r+1}+x_{r}^2
\\&=2x_{m+1-j}+2x_{j+1}x_{m-j}+\cdots + 2x_{r-1}x_{r+1}+x_{r}^2 \in \Fq
\end{align*}
which is nonzero by assumption. Also choose $\epsilon \in \Fqs^\times$ such that $\epsilon^2 = \delta$. 

Suppose that $l$ is $F$-stable, i.e. $x_k \in \Fq$ for all $k\geq j+1$. We define
\begin{gather*}
\ww_{s,t}^{i} \colonequals \vv_{s,t} - x_{m+1-s} \vv_{m+1 - j, t} \quad \textup{ if } s\not \in \{ j, r,m+1-j\}, 
\\\ww^{i-2}_{t}\colonequals \frac{1}{\ii\epsilon}(\vv_{j,t+1}+x_{j+1}\vv_{j+1,t+1}+\cdots+x_m \vv_{m,t+1}), \textup{ and}
\\ \ww_{\pm, t}^i \colonequals \pm \frac{ 1}{\sqrt{2} \epsilon} \left(\vv_{j,t}+x_{j+1}\vv_{j+1,t}+\cdots+x_m \vv_{m,t}\pm \epsilon \vv_{r,t} - (\delta \pm \epsilon x_r) \vv_{m+1-j,t}\right).
\end{gather*}
Then direct calculation shows that 
$$\fB = \{\ww^i_{s,t} \mid s \in \{1, \ldots, m, +, -\}-\{j, r, m+1-j\}, 1\leq t \leq i\} \cup \{\ww^{i-2}_t \mid 1\leq t \leq i-2\}$$
is a basis of $l^\perp/l$ adapted to $N|_{l^\perp/l}$.

Now there are two cases to consider. If $\epsilon \in \Fq^\times$, then each element in $\fB \cap \ker N|_{l^\perp/l}$ is $F$-stable, thus $N|_{l^\perp/l}$ is untwisted with respect to $F$. Otherwise, i.e. if $\epsilon \in \Fqs-\Fq$, then $F(\epsilon) = -\epsilon$ since $(\epsilon^{q-1})^2 = \delta^{q-1}=1$ but $\epsilon^{q-1} \neq 1$. Therefore each element in $\fB \cap \ker N|_{l^\perp/l}$ is $F$-stable except that $F$ swaps $\ww^i_{+,t}$ and $\ww^i_{-,t}$ and $F(\ww_t^{i-2}) = -\ww_t^{i-2}$. By Lemma \ref{lem:twistex}, we see that $N|_{l^\perp/l}$ is twisted by $z_i z_{i-2} \in A((i-2)^1i^{m-1})$ with respect to $F$.

When $l$ is $\tilde{z}F$-stable, then $x_k \in \bF_q$ for all $k \neq r$ and $(x_{r})^q=-x_{r}$. We choose a basis $\fB$ the same as before except
\begin{gather*}
\ww^{i}_{\pm, i}\colonequals \frac{ 1}{\sqrt{2}\ii \epsilon} \left(\vv_{j,t}+x_{j+1}\vv_{j+1,t}+\cdots+x_m \vv_{m,t}\pm \epsilon \vv_{r,t} - (\delta \pm \epsilon x_r) \vv_{m+1-j,t}\right).
\end{gather*}
Direct calculation shows that $\fB$ is a basis of $l^\perp/l$ adapted to $N|_{l^\perp/l}$. In this case, if $\epsilon \in \Fq^\times$ then $\tilde{z}F$ stabilizes each element in $\fB \cap \ker N|_{l^\perp/l}$ except that $\tilde{z}F$ swaps $\ww^{i}_{+, i}$ and $\ww^{i}_{-, i}$. Thus $N|_{l^\perp/l}$ is twisted by $z_i \in A((i-2)^1i^{m-1})$ with respect to $\tilde{z}F$. On the other hand, if $\epsilon \in \Fqs-\Fq$ then $F(\epsilon)=-\epsilon$ by the same reason above. In this situation $\tilde{z}F$ stabilizes each element in $\fb \cap \ker N|_{l^\perp/l}$ except that $\tilde{z}F(\ww^{i-2}_t) = -\ww^{i-2}_t$. This implies that $N|_{l^\perp/l}$ is twisted by $z_{i-2} \in A((i-2)^1i^{m-1})$ with respect to $\tilde{z}F$.

It remains to check the case when $l \in C_{r-1} - C_r$. As before we set $x_r =1$. Suppose that $l$ is $F$-stable, i.e. $x_k \in \Fq$ for all $k\geq r+1$.  We define
\begin{gather*}
\ww_{s,t}^i \colonequals \vv_{s,t} - x_{m+1-s} \vv_{r, t}- \frac{x_{m+1-s}^2}{2} \vv_{m+1-s,t} \quad \textup{ if } s\neq r, \textup{ and}
\\\ww^{i-2}_{t}\colonequals \ii(\vv_{r,t+1}+x_{r+1}\vv_{r+1,t+1}+\cdots+x_m \vv_{m,t+1}).
\end{gather*}
Then the set
$$\fB = \{\ww^i_{s,t} \mid 1\leq s \leq m, s\neq r, 1\leq t \leq i\} \cup \{\ww^{i-2}_t \mid 1\leq t \leq i-2\}$$
is a basis of $l^\perp/l$ adapted to $N|_{l^\perp/l}$. Also, each element in $\fB \cap \ker N|_{l^\perp/l}$ is $F$-stable. Thus $N|_{l^\perp/l}$ is untwisted with respect to $F$.

This time we assume that $l$ is $\tilde{z}F$-stable, i.e. $x_k^q = -x_k$ for all $k \geq r+1$. Then the same $\fB$ is a basis of $l^\perp/l$ adapted to $N|_{l^\perp/l}$, and each element in $\fB \cap \ker N|_{l^\perp/l}$ is $\tilde{z}F$-stable except that $\tilde{z}F(\ww^{i-2}_{i-2}) = -\ww^{i-2}_{i-2}$. Thus in this case $N|_{l^\perp/l}$ is twisted by $z_{i-2} \in A((i-2)^1i^{m-1})$ with respect to $\tilde{z}F$.

In order to apply Proposition \ref{prop:maintool}, it remains to calculate the cardinals of the following sets.
\begin{align*}
X&\colonequals\{l \in C^F \mid \textup{ $N|_{l^\perp/l}$ is untwisted with respect to $F$}\},
\\Y&\colonequals\{l \in C^F \mid \textup{ $N|_{l^\perp/l}$ is twisted by $z_i z_{i-2} \in A((i-2)^1i^{m-1})$ with respect to $F$}\},
\\X'&\colonequals\{l \in C^{\tilde{z}F} \mid \textup{ $N|_{l^\perp/l}$ is twisted by $z_{i-2} \in A((i-2)^1i^{m-1})$ with respect to $\tilde{z}F$}\},
\\Y'&\colonequals\{l \in C^{\tilde{z}F} \mid \textup{ $N|_{l^\perp/l}$ is twisted by $z_{i} \in A((i-2)^1i^{m-1})$ with respect to $\tilde{z}F$}\}.
\end{align*}
We first calculate $\#X$ and $\#Y$. Note that $\#X+\#Y = \#C^F = q^{m-1}$. It is clear that $\#(X \cap (C_{j-1}-C_j)) = \#(Y\cap (C_{j-1}-C_j))$ when $j>r$ since the number of quadratic residues in $\Fq^\times$ equals $|\Fq^\times|/2=(q-1)/{2}$. On the other hand, $(C_{r-1}-C_r)^F \subset X$ as we observed above. Since $\#(C_{r-1}-C_r)^F = q^{r-1}$, we have $\#X = ({q^{m-1}+q^{r-1}})/{2}$ and $\#Y =({q^{m-1}-q^{r-1}})/{2}$. Similar argument applies to $X'$ and $Y'$, and we have $\#X' =  ({q^{m-1}+q^{r-1}})/{2}$ and $Y' = ({q^{m-1}-q^{r-1}})/{2}$.

Therefore by Proposition \ref{prop:maintool} we have
\begin{align*}
\tspq{(i^m), id} =\ &\frac{q^{m-1}-1}{q-1}\tspq{((i-1)^2 i^{m-2}), id}
\\&+\frac{q^{m-1}+q^{r-1}}{2}\tspq{((i-2)^1i^{m-1}),id}
\\&+\frac{q^{m-1}-q^{r-1}}{2}\tspq{((i-2)^1i^{m-1}),z_{i-2}z_i},
\\\tspq{(i^m), z_i} =\ &\frac{q^{m-1}-1}{q-1}\tspq{((i-1)^2 i^{m-2}), z_i}
\\&+\frac{q^{m-1}+q^{r-1}}{2}\tspq{((i-2)^1i^{m-1}),z_{i-2}}
\\&+\frac{q^{m-1}-q^{r-1}}{2}\tspq{((i-2)^1i^{m-1}),z_i}.
\end{align*}
If we combine these two formulas we get
\begin{align*}
\tspq{(i^m), z_i^a} =\ &\frac{q^{m-1}-1}{q-1}\tspq{((i-1)^2 i^{m-2}), z_i^a}
\\&+\frac{q^{m-1}+q^{\frac{m-1}{2}}}{2}\tspq{((i-2)^1i^{m-1}),z_{i-2}^a}
\\&+\frac{q^{m-1}-q^{\frac{m-1}{2}}}{2}\tspq{((i-2)^1i^{m-1}),z_iz_{i-2}^{a+1}}
\end{align*}
for any $a \in \bZ$.

\subsection{$i$ even, $m$ even}  In this case $A(N) \simeq \bZ/2.$ We argue similarly to $i$ even, $m$ odd case. Let $r\colonequals \frac{m}{2}$. Let $H \subset \bP(\ker N)$ be the hypersurface consisting of zeroes of $Q$ where $Q$ is defined as in \ref{subsec:Q}. Then the Jordan type of $N|_{l^\perp/l}$ is equal to $((i-1)^2 i^{m-2})$ (resp. $((i-2)^1 i^{m-1})$) if $l \in H$ (resp. $l \not\in H$).

For $v \in \ker N$ we write $v=x_1\vv_{1,i}+x_2\vv_{2,i}+\cdots + x_m\vv_{m,i}$ for some $x_1, \ldots, x_m \in \Fqbar$. Then $H$ is defined by the equation
$$2x_1x_m+2x_2x_{m-1}+ \cdots + 2x_{r}x_{r+1}=0.$$

\subsubsection{$l \in H$ case}
We define a filtration
$$ \emptyset =H_m \subset H_{m-1}\subset \cdots \subset H_{r+1} \subset H_{r} \subset H_{r-1}  \subset \cdots \subset H_1 \subset H_0 = H$$
where $H_{j}$ is defined by the subset of $H$ with the additional condition $x_1= x_2 = \cdots = x_j = 0$. 

Suppose $l\in H$ and $\spn{v} =l$ for some $v \in \ker N$. If $l \in H_{j-1}- H_{j}$ then $x_1= x_2 = \cdots = x_{j-1}= 0$ but $x_j \neq 0$. Thus without loss of generality we may set $x_j = 1$. First assume that $l$ is $F$-stable, i.e. $x_k \in \bF_q$ for all $k\geq j+1$. We let
\begin{gather*}
\ww_{s,t}^i \colonequals \vv_{s,t} - x_{m+1-s} \vv_{m+1 - j, t} \quad \textup{ if } s\neq j \textup{ or }m+1-j, \textup{ and}
\\\ww_{1,t}^{i-1} \colonequals  \vv_{j,t}+x_{j+1}\vv_{j+1,t}+\cdots+x_m \vv_{m,t}, \qquad \ww_{2,t}^{i-1} \colonequals \vv_{m+1-j,t+1}
\end{gather*}
Then direct calculation shows that
$$\fB\colonequals \{\ww^i_{s,t} \mid 1\leq s \leq m, s\neq j \textup{ or } m+1-j, 1\leq t \leq i\} \cup \{\ww^{i-1}_{s,t} \mid s=1 \textup{ or }2, 1\leq t \leq i-1\}$$
is a basis of $l^\perp/l$ adapted to $N_{l^\perp/l}$, and each element in $\fB \cap \ker N|_{l^\perp/l}$ is $F$-stable. Thus by Corollary \ref{cor:untwisted}, $N|_{l^\perp/l}$ is untwisted with respect to $F$. Also note that $\#H^F = 1+q+\cdots + q^{r-2}+q^{r-1} + q^{r-1}+q^{r}+\cdots + q^{m-2} = ({q^{m-1}-1})/({q-1})+q^{r-1}$.

This time we assume that $l$ is $\tilde{z}F$-stable. This assumption forces that $j\not\in \{r, r+1\}$. Then we have $x_k \in \bF_q$ for all $k \not\in \{r-1, r\}$, $x_r^q=x_{r+1}$, and $x_{r+1}^q=x_r$. (Thus in particular $x_r, x_{r+1} \in \Fqs$.) We set $\fB$ to be the same as above. Then $\fB$ is a basis of $l^\perp/l$ adapted to $N|_{l^\perp/l}$, and $\tilde{z}F$ stabilizes each element in $\fB \cap \ker N|_{l^\perp/l}$ except that $\tilde{z}F$ swaps $\ww^i_{r,i}$ and $\ww^i_{r+1,i}$. Thus by Lemma \ref{lem:twistex}, $N_{l^\perp/l}$ is twisted by $z_i \in A((i-1)^2 i^{m-2})$ with respect to $\tilde{z}F$. Also note that $\#H^{\tilde{z}F} = 1+q+\cdots + q^{r-2}+q^{r}+\cdots + q^{m-2} = ({q^{m-1}-1})/({q-1})-q^{r-1}$.

%
%
%
%
%

\subsubsection{$l \not \in H$ case}
Define $C \colonequals \bP(\ker N) - H$. Similarly we have a filtration
$$ \emptyset =C_{r} \subset C_{r-1}\subset \cdots \subset C_1 \subset C_0 = C$$
where $C_{j}$ is defined by the subset of $C$ with the additional condition $x_1= x_2 = \cdots = x_j = 0$. We assume $l  \in C_{j-1} - C_j$ and $\spn{v} = l$ for some $v=x_1\vv_{1,i}+x_2\vv_{2,i}+\cdots + x_m\vv_{m,i} \in \ker N$. Then  $x_1= x_2 = \cdots = x_{j-1}= 0$ but $x_j \neq 0$. Thus without loss of generality we may put $x_j = 1$.

First we assume that $j \neq r$. We set 
\begin{align*}
\delta&\colonequals 2x_1 x_{m}+ 2x_2x_{m-1}+\cdots + 2x_{r}x_{r+1}
\\&=2x_{m+1-j}+2x_{j+1}x_{m-j}+\cdots + 2x_{r}x_{r+1} \in \Fq
\end{align*}
which is nonzero by assumption. Also choose $\epsilon \in \Fqs^\times$ such that $\epsilon^2 = \delta$. 

Suppose that $l$ is $F$-stable, i.e. $x_k \in \Fq$ for all $k \geq j+1$. We define
\begin{gather*}
\ww_{s,t}^{i} \colonequals \vv_{s,t} - x_{m+1-s} \vv_{m+1 - j, t} \quad \textup{ if } s\not \in \{ j, m+1-j\}, 
\\\ww_{m+1-j,t}^i \colonequals \frac{1}{\ii \epsilon} (\vv_{j,t}+x_{j+1}\vv_{j+1,t}+\cdots+x_m \vv_{m,t} - \delta \vv_{m+1-j,t}), \quad \textup{ and}
\\ \ww_{t}^{i-2}\colonequals \frac{1}{\ii \epsilon}(\vv_{j,t+1}+x_{j+1}\vv_{j+1,t+1}+\cdots+x_m \vv_{m,t+1})
\end{gather*}
Then direct calculation shows that 
$$\fB = \{\ww^i_{s,t} \mid 1\leq s \leq m, s\neq j, 1\leq t \leq i\} \cup \{\ww^{i-2}_t \mid 1\leq t \leq i-2\}$$
is a basis of $l^\perp/l$ adapted to $N|_{l^\perp/l}$.

There are two cases to consider. If $\epsilon \in \Fq^\times$, then each element in $\fB \cap \ker N|_{l^\perp/l}$ is $F$-stable, thus $N|_{l^\perp/l}$ is untwisted with respect to $F$. Otherwise $F(\epsilon) = -\epsilon$, thus each element in $\fB \cap \ker N|_{l^\perp/l}$ is $F$-stable except that $F(\ww^i_{m+1-j,i})=-\ww^i_{m+1-j,i}$ and $F(\ww^{i-2}_{i-2}) = -\ww^{i-2}_{i-2}$. By Lemma \ref{lem:twistex}, we see that $N|_{l^\perp/l}$ is twisted by $z_i z_{i-2} \in A((i-2)^1i^{m-1})$ with respect to $F$.

When $l$ is $\tilde{z}F$-stable, then $x_k \in \bF_q$ for all $k \neq r$, $x_r^q=x_{r+1}$, and $x_{r+1}^q=x_r$. (In particular, $x_r, x_{r+1} \in \Fqs$.) Then the same $\fB$ as above is a basis of $l^\perp/l$ adapted to $N|_{l^\perp/l}$. In this case, if $\epsilon \in \Fq^\times$ then $\tilde{z}F$ stabilizes each element in $\fB \cap \ker N|_{l^\perp/l}$ except that $\tilde{z}F$ swaps $\ww^{i}_{r, i}$ and $\ww^{i}_{r+1, i}$. Thus $N|_{l^\perp/l}$ is twisted by $z_i \in A((i-2)^1i^{m-1})$ with respect to $\tilde{z}F$. On the other hand, if $\epsilon \in \Fqs-\Fq$ then $\tilde{z}F$ stabilizes each element in $\fB \cap \ker N|_{l^\perp/l}$ except that $\tilde{z}F$ swaps $\ww^{i}_{r, i}$ and $\ww^{i}_{r+1, i}$, $F(\ww^i_{m+1-j,i})=-\ww^i_{m+1-j,i}$, and $F(\ww^{i-2}_{i-2}) = -\ww^{i-2}_{i-2}$. By Lemma \ref{lem:twistex}, this means that $N|_{l^\perp/l}$ is twisted by $z_{i-2} (=z_i^2z_{i-2}) \in A((i-2)^1i^{m-1})$ with respect to $\tilde{z}F$.

It remains to check the case when $l \in C_{r-1} - C_r$. As before we set $x_r =1$. Suppose that $l$ is $F$-stable, i.e. $x_k \in \Fq$ for all $k \geq j+1$.  We define
\begin{gather*}
\ww_{s,t}^i \colonequals \vv_{s,t} - x_{m+1-s} \vv_{r+1, t} \quad \textup{ if } s\not \in \{ r, r+1\},
\\\ww_{r+1,t}^i \colonequals \frac{1}{\ii \epsilon} (\vv_{r,t}-x_{r+1}\vv_{r+1,t}+x_{r+2}\vv_{r+2,t}+\cdots+x_m \vv_{m,t} ), \textup{ and}
\\\ww^{i-2}_{t}\colonequals \frac{1}{\ii \epsilon}(\vv_{r,t+1}+x_{r+1}\vv_{r+1,t+1}+\cdots+x_m \vv_{m,t+1}).
\end{gather*}
Here $\epsilon$ is an element in $\Fqs$ satisfying $\epsilon^2 = 2x_{r+1}$.

Then the set
$$\fB = \{\ww^i_{s,t} \mid 1\leq s \leq m, s\neq r, 1\leq t \leq i\} \cup \{\ww^{i-2}_t \mid 1\leq t \leq i-2\}$$
is a basis of $l^\perp/l$ adapted to $N|_{l^\perp/l}$. If $\epsilon \in \Fq^\times$, then each element in $\fB \cap \ker N|_{l^\perp/l}$ is $F$-stable, thus $N|_{l^\perp/l}$ is untwisted with respect to $F$. Otherwise, the same is true except that $F(\ww_{r+1,i}^i)=- \ww_{r+1,i}^i$ and $F(\ww_{i-2}^{i-2})=-\ww_{i-2}^{i-2}$, thus $N|_{l^\perp/l}$ is twisted by $z_iz_{i-2} \in A((i-2)^1i^{m-1})$ with respect to $F$.

Now assume that $l$ is $\tilde{z}F$-stable. Then we have $x_{r+1}^{q+1}=1$ and $x_k^qx_{r+1} = x_k$ for $k \not \in \{ r, r+1\}$. First suppose that $x_{r+1} \in \Fq$, i.e. $x_{r+1} = \pm 1$. Then we set
\begin{gather*}
\ww^{i}_{s,t} \colonequals \vv_{s,t} -\frac{x_{m+1-s}}{2x_{r+1}}(\vv_{r,t} + x_{r+1}\vv_{r+1,t}) - \frac{x_{m+1-s}^2}{4x_{r+1}} \vv_{m+1-s, t} \quad \textup{ if } s \not \in \{r, r+1\},
\\\ww_{r+1,t}^{i} \colonequals  \frac{1}{\ii\epsilon}(\vv_{r,t}-x_{r+1}\vv_{r+1,t}), \textup{ and}
\\\ww_{t}^{i-2} \colonequals  \frac{1}{\ii\epsilon}(\vv_{r,t+1}+x_{r+1}\vv_{r+1,t+1}+\cdots+x_m \vv_{m,t+1})
\end{gather*}
where $\epsilon$ is an element in $\Fqs$ such that $\epsilon^2 = 2x_{r+1}$. Then $\fB$ is a basis of $l^\perp/l$ adapted to $N|_{l^\perp/l}$ and each element in $\fB \cap \ker N|_{l^\perp/l}$ is $\tilde{z}F$-stable possibly except $\ww^i_{r+1, i}$ and $\ww^{i-2}_{i-2}$. Indeed, we have
$$
\tilde{z}F(\ww_{r+1,i}^i) = \frac{1}{\ii \epsilon^q}(\vv_{r+1,t}-x_{r+1}^q\vv_{r,t}) = \frac{-x_{r+1}^q}{\ii \epsilon^q}(\vv_{r,t} - x_{r+1} \vv_{r+1,t}) = \frac{-1}{\epsilon^{q-1}x_{r+1}}\ww^i_{r+1,i}
$$
and similarly $\tilde{z}F(\ww^{i-2}_{i-2}) = \frac{1}{\epsilon^{q-1}x_{r+1}}\ww^{i-2}_{i-2}$. But since $\epsilon^{2(q+1)} = 2^{q+1}x_{r+1}^{q+1} = 4$, we have $\epsilon^{q+1} = \pm 2$ and $\epsilon^{q-1}x_{r+1} = \frac{\epsilon^{q+1}}{2} = \pm 1.$ In other words, if $\epsilon^{q+1} = 2$ then $N|_{l^\perp/l}$ is twisted by $z_{i} \in  A((i-2)^1i^{m-1})$ with respect to $\tilde{z}F$, and otherwise it is twisted by $z_{i-2} \in  A((i-2)^1i^{m-1})$ with respect to $\tilde{z}F$.

In order to apply Proposition \ref{prop:maintool}, we need to calculate the cardinals of the following sets.
\begin{align*}
X&\colonequals\{l \in C^F \mid \textup{ $N|_{l^\perp/l}$ is untwisted with respect to $F$}\},
\\Y&\colonequals\{l \in C^F \mid \textup{ $N|_{l^\perp/l}$ is twisted by $z_i z_{i-2} \in A((i-2)^1i^{m-1})$ with respect to $F$}\},
\\X'&\colonequals\{l \in C^{\tilde{z}F} \mid \textup{ $N|_{l^\perp/l}$ is twisted by $z_{i-2} \in A((i-2)^1i^{m-1})$ with respect to $\tilde{z}F$}\},
\\Y'&\colonequals\{l \in C^{\tilde{z}F} \mid \textup{ $N|_{l^\perp/l}$ is twisted by $z_{i} \in A((i-2)^1i^{m-1})$ with respect to $\tilde{z}F$}\}.
\end{align*}
We first calculate $\#X$ and $\#Y$. Note that $\#X+\#Y = \#C^F = q^{m-1}-q^{r-1}$. Also it is clear that $\#(X \cap (C_{j-1}-C_j)) = \#(Y\cap (C_{j-1}-C_j))$ for $j\geq r$ since the number of quadratic residues in $\Fq^\times$ equals $|\Fq^\times|/2=({q-1})/{2}$. Thus $\#X = \#Y = ({q^{m-1}-q^{r-1}})/{2}$. On the other hand, it is also clear that $\#X'+\#Y' = \#C^{\tilde{z}F} = q^{m-1}+q^{r-1}$ and $\#(X' \cap (C_{j-1}-C_j)) = \#(Y'\cap (C_{j-1}-C_j))$ for $j>r$. Now we claim that $\#(X' \cap (C_{r-1}-C_r)) = \#(Y'\cap (C_{r-1}-C_r))$. Indeed, if we choose $a\in \Fqs^\times$ such that $a^q=-a$ (which always exists), then
$$\spn{(0, \ldots, 0, 1, x_{r+1}, x_{r+2}, \ldots, x_m)} \mapsto \spn{(0, \ldots, 0, 1, -x_{r+1}, ax_{r+2}, \ldots, ax_m)}$$
gives a bijection between $X' \cap (C_{r-1}-C_r)$ and $Y'\cap (C_{r-1}-C_r)$. Therefore, we have $\#X' = \#Y' = ({q^{m-1}+q^{r-1}})/{2}$.

Therefore by Proposition \ref{prop:maintool} we have
\begin{align*}
\tspq{(i^m), id} =\ & \left(\frac{q^{m-1}-1}{q-1}+q^{r-1}\right)\tspq{((i-1)^2 i^{m-2}), id}
\\&+\frac{q^{m-1}-q^{r-1}}{2}\tspq{((i-2)^1i^{m-1}),id}
\\&+\frac{q^{m-1}-q^{r-1}}{2}\tspq{((i-2)^1i^{m-1}),z_{i-2}z_i},
\\\tspq{(i^m), z_i} =\ & \left(\frac{q^{m-1}-1}{q-1}-q^{r-1}\right)\tspq{((i-1)^2 i^{m-2}), z_i}
\\&+\frac{(q^{m-1}+q^{r-1})}{2} \tspq{((i-2)^1i^{m-1}),z_{i-2}}
\\&+\frac{(q^{m-1}+q^{r-1})}{2} \tspq{((i-2)^1i^{m-1}),z_i}.
\end{align*}
If we combine these two formulas we get
\begin{align*}
\tspq{(i^m), z_i^a} =\ & \left(\frac{q^{m-1}-1}{q-1}+(-1)^aq^{\frac{m}{2}-1}\right)\tspq{((i-1)^2 i^{m-2}), z_i^a}
\\&+\frac{q^{m-1}-(-1)^aq^{\frac{m}{2}-1}}{2}\tspq{((i-2)^1i^{m-1}),z_{i-2}^a}
\\&+\frac{q^{m-1}-(-1)^aq^{\frac{m}{2}-1}}{2}\tspq{((i-2)^1i^{m-1}),z_{i}z_{i-2}^{a+1}}
\end{align*}
for any $a\in \bZ$.
\begin{rmk} When $G=SO_{2n+1}$ or $SO_{2n}$, $z_i$ is not necessarily in $A(i^m)$ but in $\tilde{A}(i^m)$, thus $\tspq{i^m, z_i}$ may not be well-defined. However, as we will see, this does not cause any problem in the next section.
\end{rmk}

\section{Generalization}\label{sec:gen}
Here, we discuss how to apply the results in the previous section to a nilpotent element of a general Jordan type. We start with the following lemma. Let $F'\colonequals \Ad_h\circ F$ for some $h\in G$ and let $N \in \fg$ be a $F'$-stable nilpotent element of Jordan type $(1^{m_1}2^{m_2} \cdots)$. Consider the decomposition $V=V_1\oplus V_2 \oplus \cdots$ given in \ref{subsec:decomp} such that each $V_i$ is $F'$-stable. Fix some $i \in \bZ_{>0}$ such that $m_i \neq 0$.

\begin{lem} Let $0\neq x \in (\ker_i N)^{F'}$ and $v \in (\ker_{>i} N)^{F'}$. Then there exists $g \in Z_G(N)^{F'}$ such that $g(x)= x+v$.
\end{lem}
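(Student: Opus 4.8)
The plan is to produce $g$ in the form $g=\exp(Y)$ for a suitable element $Y$ of the centralizer subalgebra $\mathfrak{z}_{\fg}(N):=\{Y\in\fg\mid [Y,N]=0\}=\Lie Z_G(N)$ (the last equality holds since $p$ is large). If $Y$ is nilpotent, fixed by $F'$, and satisfies $Y(x)=v$ together with $Y(v)=0$, then $Y^{2}(x)=0$, the exponential $g:=\exp(Y)=1+Y+\tfrac12 Y^{2}+\cdots$ is a well-defined element of $Z_G(N)^{F'}$ (it commutes with $N$, lies in $G$ because $Y$ is a nilpotent element of $\fg$, and is $F'$-fixed because $Y$ is), and $g(x)=x+Y(x)=x+v$. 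So the entire problem is to construct such a $Y$.

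To build $Y$ I would use the $N$- and $F'$-stable orthogonal decomposition $V=V_{<i}\oplus V_i\oplus V_{>i}$. Since $N|_{V_i}$ has Jordan type $(i^{m_i})$ we have $\ker_i N=\im(N^{i-1}|_{V_i})$, and likewise $\ker_j N=\im(N^{j-1}|_{V_j})$ for each $j>i$; writing $v=\sum_{j>i}v_j$ with $v_j\in\ker_j N$, there is therefore an $N$-equivariant linear map $Y_0\colon V_i\to V_{>i}$ with $Y_0(x)=v$ — for instance, choose $x^{\flat}\in V_i$ with $N^{i-1}x^{\flat}=x$ and $a_j\in V_j$ with $N^{j-1}a_j=v_j$, send the $N$-string through $x^{\flat}$ by $N^{t}x^{\flat}\mapsto\sum_{j>i}N^{t+j-i}a_j$, and extend by zero on an $N$-stable complement in $V_i$. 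The set of such $Y_0$ is a nonempty $F'$-stable affine subspace of $\Hom(V_i,V_{>i})$, so it contains an $F'$-fixed element; fix one. With $Y_0^{*}\colon V_{>i}\to V_i$ the adjoint with respect to $\br{\ ,\ }$, define $Y$ by $Y|_{V_i}=Y_0$, $Y|_{V_{>i}}=-Y_0^{*}$, and $Y|_{V_{<i}}=0$; on the orthogonal summand $V_i\oplus V_{>i}$ this is the block operator $\left(\begin{smallmatrix}0&-Y_0^{*}\\ Y_0&0\end{smallmatrix}\right)$. The verifications are then routine: $Y\in\fg$, since that block shape is skew for a form block-diagonal on $V_i\oplus V_{>i}$ (and $Y$ kills $V_{<i}\perp V_i\oplus V_{>i}$); $[Y,N]=0$, since $Y_0$ is $N$-equivariant and hence so is $Y_0^{*}$ (using $N^{*}=-N$); $Y(v)=-Y_0^{*}(v)=0$, since $\im Y_0\subseteq\ker(N^{i}|_{V_{>i}})=\bigoplus_{j>i}\im(N^{j-i}|_{V_j})$ while each $v_j\in\ker N$ is orthogonal to $\im(N^{j-i}|_{V_j})$ because $j-i\geq1$; $Y$ is nilpotent (immediate from the explicit $Y_0$, which strictly raises the $N$-Jordan filtration, or from a weight count relative to a Jacobson–Morozov triple); and $Y$ is $F'$-fixed because $Y_0$ is and adjunction is compatible with the $F'$-action on $\End(V)$.

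The one point that needs thought — and the reason the construction goes through $Y_0$ rather than more directly — is the following. The obvious elements of $\mathfrak{z}_{\fg}(N)$ built from vectors of $\ker N$ are the ``rank two'' operators $w\mapsto\br{p,w}p'\mp\br{p',w}p$ with $p,p'\in\ker N$ (these lie in $\fg$ when $p,p'$ span an isotropic plane, and commute with $N$ since $N$ kills $p,p'$), and one would like to take $p'\propto v$ and $p$ with $\br{p,x}\neq0$ so as to send $x$ to a multiple of $v$. But when $i\geq2$ one has $x\in\ker N\cap\im N$, which is exactly the radical of $\br{\ ,\ }|_{\ker N}$ because $\ker N=(\im N)^{\perp}$; hence $\br{p,x}=0$ for every $p\in\ker N$, so no such rank-two operator moves $x$ at all. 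This forces a lift $x^{\flat}$ of $x$ out of $\ker N$, after which the ``off-diagonal plus negative adjoint'' recipe above is precisely what keeps $Y$ inside $\fg$. Once that shape is in place everything else is the same bookkeeping done by hand with adapted bases in Section \ref{sec:rect}, and $F'$-equivariance is free: at each step ($x^{\flat}$, the $a_j$, the complement, $Y_0$) the available choices form a nonempty $F'$-stable affine space over $\Fqbar$, hence contain one defined over $\Fq$.
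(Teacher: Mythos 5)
Your proof is correct, but it takes a genuinely different route from the paper's. The paper argues by explicit construction: it reduces to the case of exactly two block sizes $i<j$, cuts $V_i$ and $V_j$ down to the cyclic subspaces generated by lifts $x'$, $v'$ of $x$, $v$ (adjoining dual partners $y'$, $w'$ when those cyclic subspaces are isotropic), writes down $g$ by hand in four cases according to whether one or two blocks survive on each side --- the hardest case $m_i=m_j=1$ requiring recursively defined coefficients $a_k,b_k,c_k$ that are in effect the Taylor coefficients of a square root --- and finishes by induction on the number of $j$ with $v_j\neq 0$. You replace all of this with one uniform step: an $F'$-fixed, $N$-equivariant $Y_0\in\Hom(V_i,V_{>i})$ with $Y_0(x)=v$ (existence of a rational point in the nonempty $F'$-stable affine space of such maps is Lang's theorem for a vector group), promoted to $Y=Y_0-Y_0^{*}\in\Lie Z_G(N)$ and exponentiated. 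The verifications all check out: $Y_0^{*}(v)=0$ because $\im Y_0\subseteq\bigoplus_{j>i}\im(N^{j-i}|_{V_j})$ is orthogonal to $\ker_{>i}N$; nilpotency of $Y$ follows from the weight count, since every $N$-equivariant map $V_i\to V_j$ with $j>i$ strictly lowers $h$-weights and adjunction preserves the weight shift; and your closing observation that $x$ lies in the radical of $\br{\ ,\ }|_{\ker N}$ when $i\geq 2$ correctly diagnoses why no operator supported on $\ker N$ can work. The one cost is that $\exp(Y)$ needs $p$ to exceed the nilpotency order of $Y$, whereas the paper's formulas only ever divide by $2$ and by nonzero values of the form, consistent with its parenthetical claim of validity for all $p>2$; under the standing assumption that $p$ is large this is immaterial, and you could recover $p>2$ by using the Cayley transform $(1-Y/2)^{-1}(1+Y/2)$, which still sends $x$ to $x+v$ since $Y^2x=0$. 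What your approach buys is uniformity: no case analysis, no reduction to two blocks, and no induction.
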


\begin{proof}
Write $v = \sum_{j} v_j$ where $v_j \in V_j$. First we claim that we may restrict our situation to $V'=V_i \oplus \bigoplus_{j}V_j$ where the direct sum $\bigoplus_{j}V_j$ is over all $j$ such that $v_j \neq 0$. To this end, let $V''=\bigoplus_{j}V_j$ which is over all $j$ such that $v_j = 0$ so that $V'\oplus V''$ is an orthogonal decomposition of $V$. Clearly $F'$ and $N$ preserves each $V'$ and $V''$, thus if we let $G'\colonequals Sp(V')$ and $G''\colonequals Sp(V'')$ then $N \in \Lie (G'\times G'') \subset \Lie G$ and $F'$ restricts to $G', \Lie G',$ etc. Now if there exists $g \in Z_{G'}(N|_{V'})^{F'}$ such that $g(x) = x+v$, then $(g, id) \in G'^{F'}\times G''^{F'} \subset G^{F'}$ sends $x$ to $x+v$, from which the claim follows.

Now we first consider the case when $V= V_j \oplus V_i$ for some $j > i$, i.e. the Jordan type of $N$ is $(i^{m_i}j^{m_j})$. The $v=0$ case is trivial, thus suppose otherwise and choose $x' \in V_i^{F'}, v' \in V_j^{F'}$ such that $N^{i-1}x' = x$ and $N^{j-1}v' = v$. (Such elements always exist.) Furthermore we may assume that $\br{N^{a}x', N^{b}x'}=0$ unless $a+b=i-1$ and $\br{N^{a}v', N^{b}v'}=0$ unless $a+b=j-1$. (This can easily be shown by using a standard model.) We consider the following two cases.
\begin{enumerate}
\item Restriction of $\br{\ ,\ }$ on $\spn{x', Nx', \ldots, N^{i-1}x'}$ is identically zero. We choose $y' \in V_i^{F'}$ such that 
\begin{gather*}
\br{N^a x', N^{b}y'}=0 \textup{ unless } a+b=i-1,
\\(-1)^k\br{N^k x', N^{i-1-k}y'}=1 \textup{ for } 0\leq k \leq i-1, \textup{ and}
\\\br{N^ay',N^by'}=0 \textup{ for any } a,b \in\bN.
\end{gather*}
This is always possible, which can be shown by using a standard model. If we let $V_i' = \spn{x', Nx', \ldots, N^{i-1}x', y', Ny', \ldots, N^{i-1}y'}$, then $V_i'^\perp\cap V_i'=0$ and we have an orthogonal decomposition
$$V\colonequals V_j \oplus V_i' \oplus (V_i'^\perp\cap V_i)$$
which is both $N$-stable and $F'$-stable. Now by similar reason to above we may assume that $V_i'^\perp\cap V_i=0$ and $V_i = V_i'$.
\item Otherwise, set $V_i' \colonequals \spn{x', Nx', \ldots, N^{i-1}x'}$. Then $V_i'^\perp\cap V_i' = 0$ and we have an orthogonal decomposition
$$V\colonequals V_j \oplus V_i' \oplus (V_i'^\perp\cap V_i)$$
which is both $N$-stable and $F'$-stable. Now by similar reason to above we may assume that $V_i'^\perp \cap V_i=0$ and $V_i=V_i'$.
\end{enumerate}
We apply the same argument to $v' \in V_j$. More precisely, if $\br{\ ,\ }$ on $\spn{v', Nv', \ldots, N^{j-1}v'}$ is identically zero then we choose $w' \in V_j^{F'}$ such that 
\begin{gather*}
\br{N^a v', N^{b}w'}=0 \textup{ unless } a+b=j-1,
\\(-1)^k\br{N^k v', N^{j-1-k}w'}=1, \textup{ for } 0 \leq k \leq j-1 \textup{ and}
\\\br{N^aw',N^bw'}=0 \textup{ for any } a,b \in\bN.
\end{gather*}
Then we may assume that $V_j=\spn{v', Nv', \ldots, N^{j-1}v', w', Nw', \ldots, N^{j-1}w'}$. Otherwise, we may assume that $V_j = \spn{v', Nv', \ldots, N^{j-1}v'}$. 

Now it suffices to consider the following four cases.
\begin{enumerate}[label=$\bullet$,  leftmargin =*] 
\item $m_i = m_j=2$. We define $g: V \rightarrow V$ to be
$$g(y') = y',\quad g(v') = v',\quad g(x') = x'+N^{j-i}v',\quad g(w') = w'- (-1)^{j-i} y'$$
which is extended by linearity and the condition $\Ad_g(N) = N$.
\item $m_i=2, m_j=1$. In this case $j$ is always even. We define $g: V \rightarrow V$ to be
\begin{gather*}
g(y') = y',\quad g(x') = x'+N^{j-i}v'+\frac{(-1)^{i-1}\br{v', N^{j-1}v'}}{2}N^{j-i}y',
\\ g(v') = v'+(-1)^{i-1}\br{v', N^{j-1}v'}y'
\end{gather*}
which is extended by linearity and the condition $\Ad_g(N) = N$.
\item $m_i=1, m_j=2$. In this case $i$ is always even. We define $g: V \rightarrow V$ to be
$$g(v') = v', \quad g(x') = x'+N^{j-i}v', \quad g(w') = w' + \frac{(-1)^{j-1}}{\br{x', N^{j-1}x'}}x'$$
which is extended by linearity and the condition $\Ad_g(N) = N$.
\item $m_i=m_j=1$. In this case both $i$ and $j$ are even. Define $a_k, b_k, c_k, \ldots \in \Fq$ for $k \in \bN$ inductively such that they satisfy
\begin{gather*}
a_0=1, \quad a_1 = \frac{\br{v', N^{j-1}v'}}{2\br{x', N^{i-1}x'}}, \quad \sum_{s=0}^k a_s a_{k-s}=0 \textup{ for } k \geq 2,
\\ c_0 = 1,\quad  \br{x', N^{i-1}x}\sum_{s=0}^{k-1}b_sb_{k-1-s}+\br{v', N^{i-1}v'}\sum_{s=0}^kc_sc_{k-s}=0 \textup{ for } k\geq 1, \textup{ and}
\\  \br{x', N^{i-1}x}\sum_{s=0}^k b_sa_{k-s}+\br{v', N^{i-1}v'}c_k =0 \textup{ for } k \geq 0.
\end{gather*}
Then we define $g: V\rightarrow V$ to be
\begin{gather*}
g(x') = x'+N^{j-i}v' + \sum_{k\geq 1}a_k N^{k(j-i)}x'
\\g(v') = v'+\sum_{k\geq 0} b_k N^{k(j-i)}x' + \sum_{k\geq 1}c_k N^{k(j-i)}v'
\end{gather*}
which is extended by linearity and the condition $\Ad_g(N) = N$.
\end{enumerate}
In each case, direct calculation shows that $g$ is $F'$-stable, $g$ preserves $\br{\ ,\ }$, and $g(x) = x+v$. Thus the result holds.

For general $N \in \fg$, as we observed already it suffices to prove the case when $V_{<i} = 0$ and $v_j \neq 0$ for any $j > i$ such that $m_j \neq 0$. Let $j'>i$ be the smallest integer among such $j$. Then we have an orthogonal decomposition $V=V_{>j'} \oplus V_{j'} \oplus V_i.$
Now by the argument above, if we set $G' = Sp(V_{j'} \oplus V_i)$ then there exists $g \in Z_{G'}(N)^{F'}\subset Z_{G}(N)^{F'}$ such that $g(x) = x+v_{j'}$. Therefore, if we can find $g' \in Z_G(N)^{F'}$ such that $g'(x) = x+v-v_{j'}$, then $(g\circ g')(x) = x+v$ and the claim follows. Now we use induction on $\#\{j>i \mid v_j \neq 0\}$ to complete the proof.
\end{proof}

This lemma has a following consequence. Suppose we are given $l \in \bP(\ker_i N)$, $l' \in \bP(\ker_{\geq i} N)$ such that $F'(l) = l, F'(l') = l'$, and $l\equiv l' \mod \ker_{>i} N$. (In particular, $l' \not\in \bP(\ker_{>i} N)$.) Then there exist $x, v\in V^{F'}$ such that $l=\spn{x}$ and $l'=\spn{x+v}$. Now the lemma above says that there exists $g \in Z_G(N)^{F'}$ such that $g\cdot l = l'$. It gives an $F'$-equivariant isomorphism $g: \eta_N^{-1}(l) \simeq \eta_N^{-1}(l')$. In particular, if we set $F'=\tilde{z}F$ for some $\tilde{z} \in Z_G(N)^F$, then we have
$$\tr(w\tilde{z}^*F^*, H^*(\eta_N^{-1}(l))=\tr(w\tilde{z}^*F^*, H^*(\eta_N^{-1}(l')).$$

Recall that we have (before Proposition \ref{prop:maintool})
\begin{align*}
\tr(wzF^*, H^*(\cB_N))  &= \sum_{l \in \bP(\ker N)^{\tilde{z}F}} \tr(w\tilde{z}^*F^*, H^*(\eta^{-1}_N(l))).
\\&= \sum_{i \geq 0} \sum_{l \in (\bP(\ker_{\geq i} N) - \bP(\ker_{>i} N))^{\tilde{z}F}} \tr(w\tilde{z}^*F^*, H^*(\eta_N^{-1}(l))).
\end{align*}
Now the observation above implies that 
\begin{align*}
&\sum_{l \in (\bP(\ker_{\geq i} N) - \bP(\ker_{>i} N))^{\tilde{z}F}} \tr(w\tilde{z}^*F^*, H^*(\eta_N^{-1}(l)))
\\&=
 \sum_{l \in \bP(\ker_{i} N)^{\tilde{z}F}} \#\{l' \in  (\bP(\ker_{\geq i} N) - \bP(\ker_{>i} N))^{\tilde{z}F} \mid  l'\equiv l \mod \ker_{>i}N \} \tr(w\tilde{z}^*F^*, H^*(\eta_N^{-1}(l))).
\end{align*}
In order to simplify this formula, consider a canonical projection
$$\bP(\ker_{\geq i} N) - \bP(\ker_{>i} N) \rightarrow \bP(\ker_{ i }N) $$
which is a fiber bundle with fiber isomorphic to $\ker_{>i} N \simeq \bA^{m_{>i}}$ where $m_{>i} \colonequals \sum_{j>i} m_j$. This morphism is clearly $\tilde{z}F$-equivariant, and for each $l \in \bP(\ker_{ i}N)$ there are exactly $q^{m_{>i}}$ $\tilde{z}F$-stable points in the fiber of $l$ since $\#(\bA^{d})^{F'}=q^d$ for any Frobenius $F'$ on $\bA^d$. (ref. \cite[Proposition 10.11]{dm91}) Therefore we have
\begin{align*}
\tr(wzF^*, H^*(\cB_N)) = \sum_{i \geq 0} q^{m_{>i}}  \sum_{l \in \bP(\ker_{i} N)^{\tilde{z}F}} \tr(w\tilde{z}^*F^*, H^*(\eta_N^{-1}(l)))
\end{align*}
Then for each $i$, to calculate $\sum_{l \in \bP(\ker_{i} N)^{\tilde{z}F}} \tr(w\tilde{z}^*F^*, H^*(\eta_N^{-1}(l)))$ we may apply the results in the previous section since each $V_i \subset V$ is a direct summand in the orthogonal decomposition $V=V_1\oplus V_2 \oplus \cdots$. The following theorem is now a natural consequence.

\begin{thm}\label{thm:mainq} Let $z=\prod_j z_j^{a_j} \in A(\lambda)$ for some $a_j \in \bZ$. Then $\Res^W_{W'}\tspq{\lambda,z}$ is equal to
\begin{align*}
& \sum_{i \textup{ odd}}  q^{m_{>i}}\frac{q^{m_i}-1}{q-1} \tspq{\lambda\pch{i,i}{i-1,i-1}, z}
\\& +\sum_{\substack{i \textup{ even}\\m_i \textup{ odd}}} q^{m_{>i}} \left[
\begin{aligned}
&\frac{q^{{m_i}-1}-1}{q-1}\tspq{\lambda\pch{i,i}{i-1,i-1}, z}
\\&+\frac{q^{{m_i}-1}+q^{(m_i-1)/2}}{2}\tspq{\lambda\pch{i}{i-2},z (z_iz_{i-2})^{a_i}}
\\&+\frac{q^{{m_i}-1}-q^{(m_i-1)/2}}{2}\tspq{\lambda\pch{i}{i-2},z (z_i z_{i-2})^{a_i+1}}
\end{aligned}
\right]
\\&+\sum_{\substack{i \textup{ even}\\m_i \textup{ even}}} q^{m_{>i}} \left[
\begin{aligned}
& \left(\frac{q^{m_i-1}-1}{q-1}+(-1)^{a_i}q^{m_i/2-1}\right)\tspq{\lambda\pch{i,i}{i-1,i-1}, z}
\\&+\frac{q^{{m_i}-1}-(-1)^{a_i}q^{m_i/2-1}}{2}\tspq{\lambda\pch{i}{i-2},z (z_iz_{i-2})^{a_i}}
\\&+\frac{q^{{m_i}-1}-(-1)^{a_i}q^{m_i/2-1}}{2}\tspq{\lambda\pch{i}{i-2},z (z_i z_{i-2})^{a_i+1}}
\end{aligned}
\right].
\end{align*}
\end{thm}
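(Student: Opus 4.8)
The plan is to derive the formula from the identity
$$\tr(wzF^*, H^*(\cB_N)) = \sum_{i \geq 0} q^{m_{>i}}  \sum_{l \in \bP(\ker_{i} N)^{\tilde{z}F}} \tr(w\tilde{z}^*F^*, H^*(\eta_N^{-1}(l)))$$
established just above, together with the rectangular computations of Section~\ref{sec:rect}. First I would fix a split $F$-stable $N\in\fg$ of Jordan type $\lambda$ and an $F$-stable lift $\tilde{z}\in Z_G(N)$ of $z$, so that for every $w\in W'$ the left-hand side equals $(\Res^W_{W'}\tspq{\lambda,z})(w)$; since virtual characters of $W'$ are determined by their values, it then suffices to check, for each $i$ with $m_i>0$, that the inner sum $\sum_{l\in\bP(\ker_iN)^{\tilde{z}F}}\tr(w\tilde{z}^*F^*,H^*(\eta_N^{-1}(l)))$ equals $\tfrac{q^{m_i}-1}{q-1}\tspq{\lambda\pch{i,i}{i-1,i-1},z}(w)$ when $i$ is odd, and the corresponding $m_i$-odd or $m_i$-even bracket of the theorem evaluated at $w$ when $i$ is even.

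Fixing such an $i$ and a line $l=\spn{x}\in\bP(\ker_iN)$ with $x\in V_i$, I would use that the decomposition $V=V_1\oplus V_2\oplus\cdots$ is orthogonal and that the form on $Sp_{2n}$ is alternating to obtain $l\subset l^\perp$ and an $N$-stable orthogonal decomposition $l^\perp/l=\bigl((V_i\cap l^\perp)/l\bigr)\oplus\bigoplus_{j\neq i}V_j$; hence $\eta_N^{-1}(l)$ is the Springer fiber of $N|_{l^\perp/l}$. The summand $(V_i\cap l^\perp)/l$ is exactly the rectangular situation of Section~\ref{sec:rect} — with $N|_{V_i}$ of type $(i^{m_i})$ and the Frobenius on $V_i$ equal to $F$ or $\tilde{z}_iF$ according as $a_i=0$ or $a_i=1$ — so the Jordan type of $N|_{l^\perp/l}$ is $\lambda\pch{i,i}{i-1,i-1}$ or $\lambda\pch{i}{i-2}$ exactly as the dichotomy there prescribes, and the cardinalities of the loci in $\bP(\ker_iN)^{\tilde{z}F}$ yielding each type, and each twisting $z(l)_{\mathrm{rect}}$ of the $V_i$-part, are the point counts already carried out there. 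It remains to see how $z(l)_{\mathrm{rect}}$ combines with the untouched blocks $V_j$, $j\neq i$: adjoining to the explicit adapted bases $\fB$ of $(V_i\cap l^\perp)/l$ from Section~\ref{sec:rect} the rational-standard-model bases of the $V_j$ produces a basis of $l^\perp/l$ adapted to $N|_{l^\perp/l}$, on which $\tilde{z}F=\Ad_{\tilde{z}}\circ F$ moves the $V_j$-vectors exactly as $\tilde{z}_j^{a_j}$ does; so by Proposition~\ref{prop:twisted} (cf.\ Lemma~\ref{lem:twistex}) the twisting of $N|_{l^\perp/l}$ is $z(l)=\bigl(\prod_{j\neq i}z_j^{a_j}\bigr)z(l)_{\mathrm{rect}}=z\,z_i^{a_i}\,z(l)_{\mathrm{rect}}$.

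With this in hand I would invoke the combined $a$-dependent rectangular identities at the ends of the subsections of Section~\ref{sec:rect}. Multiplying each (with $a=a_i$ and $m=m_i$) through by the external factor $z\,z_i^{a_i}$ and simplifying using $z_i^{2a_i}=id$ — so that $z\,z_i^{a_i}\cdot z_i^{a_i}=z$, $z\,z_i^{a_i}\cdot z_{i-2}^{a_i}=z(z_iz_{i-2})^{a_i}$, and $z\,z_i^{a_i}\cdot z_iz_{i-2}^{a_i+1}=z(z_iz_{i-2})^{a_i+1}$ — converts the inner sum $\sum_l\tspq{\lambda(l),z(l)}(w)$ into exactly the bracket predicted by the theorem for this $i$ (and into $\tfrac{q^{m_i}-1}{q-1}\tspq{\lambda\pch{i,i}{i-1,i-1},z}(w)$ for $i$ odd, where $A(i^{m_i})$ is trivial and $m_i$ is automatically even). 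Multiplying by $q^{m_{>i}}$ and summing over all $i\geq0$ — the terms with $m_i=0$ dropping out since then $\bP(\ker_iN)=\emptyset$ — gives the asserted expression for $\Res^W_{W'}\tspq{\lambda,z}$.

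The main obstacle is the twisting bookkeeping: one must argue carefully that $z(l)$ really factors as the product of the rectangular contribution of the modified block $V_i$ and the untouched contribution $\prod_{j\neq i}z_j^{a_j}$ of the remaining blocks. This rests on Proposition~\ref{prop:twisted} — the twisting being detected block-by-block through the splitness of the forms $Q^k_{\tilde{z}F}$ — together with the observation that for even $k$ whose block is disjoint from $V_i$ the passage to $l^\perp/l$ leaves both $Q^k$ and its $\tilde{z}F$-form unchanged, so that its splitness is governed solely by $a_k$; it also requires keeping track of how $z_i$ and $z_{i-2}$ are to be reinterpreted inside $A(\lambda\pch{i,i}{i-1,i-1})$ and $A(\lambda\pch{i}{i-2})$ in the degenerate cases $m_i\in\{1,2\}$ and $i=2$, where some of these generators become the identity and the matching coefficients happen to vanish. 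Everything else is substitution into the point counts already established in Section~\ref{sec:rect}.
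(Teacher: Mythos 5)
Your proposal is correct and follows essentially the same route as the paper: starting from the displayed identity $\tr(wzF^*, H^*(\cB_N)) = \sum_{i}q^{m_{>i}}\sum_{l\in\bP(\ker_iN)^{\tilde{z}F}}\tr(w\tilde{z}^*F^*,H^*(\eta_N^{-1}(l)))$, applying the rectangular computations of Section~\ref{sec:rect} blockwise via the orthogonal decomposition $l^\perp/l=\bigl((V_i\cap l^\perp)/l\bigr)\oplus\bigoplus_{j\neq i}V_j$, and combining the twistings multiplicatively across blocks. In fact you spell out the bookkeeping $z(l)=z\,z_i^{a_i}\,z(l)_{\mathrm{rect}}$ more explicitly than the paper, which simply asserts that the results of the previous section may be applied to each summand $V_i$.
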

Thus, Theorem \ref{thm:main} is valid at least when its formula is evaluated at $x=q$. However, the proof of Theorem \ref{thm:mainq} remains valid when we replace $q$ by $q^a$ for any $a \in \bZ_{>0}$, i.e. the formula of Theorem \ref{thm:main} is valid when evaluated at infinitely many integers $\{q^a \mid a \in \bZ_{>0}\}$. As we already know that $\Res^W_{W'}\tspx{\lambda, z}$ is a polynomial in $x$, we conclude that Theorem \ref{thm:main} is true as stated.

\begin{rmk} When $G=SO_{2n+1}$ or $G=SO_{2n}$, we need to switch the condition of $i$ being even and $i$ being odd in the expression above. After this, if we start with $z \in A(\lambda)$ and after we remove terms with either zero coefficient and of the form $\tspq{\lambda\pch{1}{-1}, -}$, then one can check that there does not appear any term of the form $\tspq{\lambda', z'}$ with $z' \in \tilde{A}(\lambda') -A(\lambda')$. Therefore, Theorem \ref{thm:mainq} is also valid when $G=SO_{2n+1}$ or $G=SO_{2n}$ (after switching the condition of $i$ being even and $i$ being odd). Also, the theorem holds even when $G=SO_{2n}$ and one of $\lambda$, $\lambda\pch{i,i}{i-1,i-1}$, or $\lambda\pch{i}{i-2}$ is very even, see the remark at the end of Section \ref{sec:partial}.
\end{rmk}

\bibliographystyle{amsalphacopy}
\bibliography{betti}

\end{document}